\newtheorem{thm}{Theorem}[section]
\newtheorem{conj}[thm]{Conjecture}
\newtheorem{lem}[thm]{Lemma}
\newtheorem{defi}[thm]{Definition}
\newtheorem{question}[thm]{Question}
\newtheorem{claim}[thm]{Claim}
\newtheorem{corr}[thm]{Corollary}
\theoremstyle{remark}
\begin{document}

\title{Even cycle creating paths}

\author{Daniel Soltész}
\address{MTA, Rényi Institute}
\email[Daniel Soltész]{solteszd@renyi.hu}
\thanks{\noindent This research of the author was
supported by the Hungarian Foundation for Scientific Research Grant (OTKA) No. 108947 and by the National Research, Development and Innovation Office NKFIH,  No. K-120706.}

\makeatother


\begin{abstract}
We say that two graphs $H_1,H_2$ on the same vertex set are $G$-creating ($G$-different in other papers, this difference is explained in the introduction) if the union of the two graphs contains $G$ as a subgraph. Let $H(n,k)$ be the maximal number of pairwise $C_k$-creating paths (of arbitrary length) on $n$ vertices. The behaviour of $H(n,2k+1)$ is much better understood than the behaviour of $H(n,2k)$, the former is an exponential function of $n$ while the latter is larger than exponential, for every fixed $k$. We study $H(n,k)$ for fixed $k$ and $n$ tending to infinity. The only non trivial upper bound on $H(n,2k)$ was in the case where $k=2$ $$H(n,4)\leq n^{\left(1-\frac{1}{4} \right) n-o(n)}, $$ this was proved by Cohen, Fachini and Körner. In this paper, we generalize their method to prove that for every $k \geq 2$,

$$H(n,2k) \leq n^{\left( 1- \frac{2}{3k^2-2k} \right)n-o(n)}. $$
Our proof uses constructions of bipartite, regular, $C_{2k}$-free graphs with many edges by Reiman, Benson, Lazebnik, Ustimenko and Woldar. For some special values of $k$ we can have slightly denser such bipartite graphs than for general $k$, this results in having better upper bounds on $H(n,2k)$ than stated above for these special values of $k$.
\end{abstract}

\maketitle

\section{Introduction}

The problem of determining the maximal number of pairwise $G$-creating paths on $n$ vertices has a code theoretic flavour. Indeed, we wish to have as many objects as possible (paths in this case) with the restriction that every pair of objects is different in a prescribed way (having $G$ in their union). The original motivation for these problems ultimately came from a desire to understand Shannon capacity of graphs \cite{gabortoldme}.  In previous papers \cite{original,triangle,komesi} instead of $G$-creating, the name $G$-different was used to highlight the connection with code theory. After multiple talks about the subject this name turned out to be confusing or not satisfactory for a large portion of the audiences, hence in this paper we use $G$-creating. 

Observe that in the definition of $H(n,k)$, we can safely assume that each path is of maximal length. Indeed, given a set of pairwise $C_k$-creating paths, if one of the paths $P$ is not of maximal length, we can add extra edges to it until its length reaches $n-1$. This new maximal length path was not in the original family of paths since its union with $P$ does not contain any cycle.

The study of $H(n,k)$ was initiated in \cite{komesi}. The authors of \cite{komesi} were motivated by a question concerning permutations. Hence they defined $H(n,k)$ using Hamiltonian paths of the complete graph $K_n$. They observed that the maximal number of Hamiltonian paths of $K_n$ so that every pairwise union contains an odd cycle is the number of balanced bipartitions of $[n]$. (The requirement that each union contains an odd cycle is equivalent to the requirement that no union can be bipartite. Since every Hamiltonian path is a balanced bipartite graph we cannot have more than the number of balanced bipartitions of $[n]$. And since a Hamiltonian path has a unique bipartition as a bipartite graph, any system of Hamiltonian paths with pairwise different  balanced bipartitions satisfies our conditions. ) They asked whether the answer remains the same if we insist on having a triangle in every union. This was answered affirmatively in \cite{triangle}. 

\begin{thm}[I. Kovács, D. S. \cite{triangle}]\label{thm:triangle}
For every integer $n \geq 3$,
\[H(n,3)= \begin{cases} \binom{n}{\left \lfloor \frac{n}{2} \right \rfloor} & \text{ when } n \equiv 1 \mod{2} \\
\frac{1}{2}\binom{n}{\left \lfloor \frac{n}{2} \right \rfloor} & \text{ when } n \equiv 0 \mod{2}.
\end{cases}\]
\end{thm}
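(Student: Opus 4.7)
The plan is to prove the upper and lower bounds separately, both exploiting the bipartite structure of Hamiltonian paths.

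\emph{Upper bound.} This follows immediately from the observation recorded in the paragraph preceding the theorem. Every Hamiltonian path is a bipartite graph with a unique unordered proper $2$-coloring of its vertex set, and two Hamiltonian paths sharing the same bipartition have a bipartite union, which contains no odd cycle and in particular no triangle. Thus every pairwise $C_3$-creating family of Hamiltonian paths on $[n]$ injects into the set of balanced bipartitions of $[n]$, whose cardinality is exactly $\binom{n}{\lfloor n/2 \rfloor}$ when $n$ is odd and $\tfrac{1}{2}\binom{n}{n/2}$ when $n$ is even.

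\emph{Lower bound.} The task is to exhibit, for every balanced bipartition $\{A,B\}$ of $[n]$, a distinguished Hamiltonian path $P_{A,B}$ with that bipartition, so that the resulting family is pairwise $C_3$-creating (which, combined with the upper bound, forces equality). My first attempt would be the naive \emph{zigzag}: list $A$ and $B$ in increasing order and interleave them into a single Hamiltonian path. Unfortunately this already fails for $n=5$: the bipartitions $\{\{1,2\},\{3,4,5\}\}$ and $\{\{4,5\},\{1,2,3\}\}$ yield paths $3{-}1{-}4{-}2{-}5$ and $1{-}4{-}2{-}5{-}3$ that share three of their four edges, and their union is a triangle-free $5$-cycle. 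So a more careful construction is required --- one that introduces bipartition-dependent ``shifts'' or ``twists'' into the zigzag, forcing structurally similar bipartitions to produce paths that differ in many edges.

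\emph{Verification strategy and main obstacle.} Once a candidate construction is fixed, the triangle-creating property would be verified by casework. Given two distinct bipartitions, I would pick a vertex $v$ on which they disagree, read off the (at most two) edges incident to $v$ in each of the two paths, and exhibit a third edge from the union that closes a triangle on three vertices near $v$ in the chosen total order. The cases naturally split according to the parity of $n$, the relative position of $v$ in the ordered lists of $A_1,B_1,A_2,B_2$, and whether the symmetric difference $A_1 \triangle A_2$ consists of a single swap or is larger. The \textbf{main obstacle} is the design of the construction itself: it must be simple enough to analyse uniformly, yet must differentiate close bipartitions strongly enough to avoid the ``long-cycle'' failure mode illustrated above. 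Once a correct construction is in hand, the triangle verification should reduce to a finite local argument around the vertex of disagreement.
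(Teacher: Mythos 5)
This theorem is not actually proved in the present paper; it is imported from the reference \cite{triangle}, and the surrounding text only sketches why the upper bound holds (via counting balanced bipartitions) and remarks that ``the hard part of Theorem~\ref{thm:triangle} is the construction of a suitable set of Hamiltonian paths.'' Your proposal matches that assessment exactly, but it does not supply the hard part.

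Concretely: your upper-bound argument is correct and complete --- every Hamiltonian path on $n$ vertices has a unique unordered bipartition, two paths with the same bipartition have bipartite (hence triangle-free) union, and the number of balanced bipartitions is $\binom{n}{\lfloor n/2\rfloor}$ for $n$ odd and $\tfrac12\binom{n}{n/2}$ for $n$ even. Your counterexample to the naive zigzag is also correct: for $n=5$ the paths $3\text{--}1\text{--}4\text{--}2\text{--}5$ and $1\text{--}4\text{--}2\text{--}5\text{--}3$ associated with the bipartitions $\{\{1,2\},\{3,4,5\}\}$ and $\{\{4,5\},\{1,2,3\}\}$ have a $5$-cycle as their union, which is triangle-free. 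But the lower bound --- exhibiting, for each balanced bipartition, a Hamiltonian path realizing it, so that the family is pairwise triangle-creating --- is left entirely open. You explicitly flag ``the design of the construction itself'' as the main obstacle and do not resolve it. Since this construction is precisely the content of the theorem (the upper bound is the observation already in the introduction), the proposal has a genuine gap and does not constitute a proof. The actual construction and its triangle-creating verification are nontrivial and are the whole substance of \cite{triangle}; a few lines of strategy cannot substitute for them.
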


Theorem \ref{thm:triangle} implies that $H(n,3)= 2^{n-o(n)}$. The hard part of Theorem \ref{thm:triangle} is the construction of a suitable set of Hamiltonian paths. The method of the construction in \cite{triangle} was generalized in \cite{oddcycle}.

\begin{thm}[I. Kovács, D. S. \cite{oddcycle}]
For every integer $ \ell \geq 1$,
$$H(n,2^{\ell}+1)= 2^{n-o(n)}. $$
\end{thm}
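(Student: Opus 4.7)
I would establish matching upper and lower bounds of $2^{n-o(n)}$.

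The upper bound $H(n,2^\ell+1)\le 2^{n-o(n)}$ is inherited from the same bipartition argument as for $H(n,3)$ in Theorem~\ref{thm:triangle}. Every maximal-length path on $[n]$ is a connected bipartite graph, hence admits a unique balanced bipartition (up to swapping the two parts). If the union of two such paths contains $C_{2^\ell+1}$, the union is non-bipartite, so the two paths must induce distinct bipartitions. Therefore $H(n,2^\ell+1)\le \binom{n}{\lfloor n/2\rfloor}=2^{n-o(n)}$. Note that this inequality uses only that $2^\ell+1$ is odd; the specific form $2^\ell+1$ will play its role in the lower bound.

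For the lower bound I would construct $2^{n-o(n)}$ maximal-length paths on $[n]$ whose pairwise unions each contain a cycle of length exactly $2^\ell+1$. The plan is to start from the $2^{n-o(n)}$ pairwise $C_3$-creating paths supplied by Theorem~\ref{thm:triangle} and modify each of them locally by attaching a constant-size gadget $G_\ell$ (whose size depends on $\ell$ only) at a canonical location shared by all paths. The gadget is to be designed so that whenever a triangle forms in the pairwise union of two base paths at this location, it is extended through $G_\ell$ into a cycle of length exactly $2^\ell+1$. The recursive structure of $2^\ell+1$ suggests that $G_\ell$ itself can be built by iterating a simpler promoting operation $\ell-1$ times, each step roughly converting a $(2^{i}+1)$-cycle into a $(2^{i+1}+1)$-cycle. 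Since $|G_\ell|=O_\ell(1)$, the vertex set grows by a constant, and after extending each modified path to maximal length on the enlarged vertex set the family size is preserved up to $o(n)$ in the exponent.

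The step I expect to be the main obstacle is achieving this promotion \emph{uniformly} over all pairs: the gadget and its attachment must guarantee that every pair in the large modified family produces a cycle of length exactly $2^\ell+1$, with no shorter odd cycle accidentally created by interaction between $G_\ell$ and an arbitrary base path. To accomplish this one has to exploit the rigid combinatorial structure of the $C_3$-creating construction of~\cite{triangle}---in particular, locating a canonical site at which the forced triangle of every pairwise union passes---so that $G_\ell$ can be attached at that single site uniformly across the family. Verifying this pairwise compatibility is the combinatorial heart of the argument and is where I would expect the bulk of the proof to lie.
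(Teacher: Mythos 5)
Your upper bound is fine and is the standard bipartition-counting argument already recalled in the paper's discussion around Theorem \ref{thm:triangle}; it uses only that $2^\ell+1$ is odd.

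The lower bound has a genuine gap. You propose to take the $C_3$-creating family of Theorem \ref{thm:triangle} as a black box and attach a constant-size gadget $G_\ell$ at a ``canonical site'' through which the forced triangle of every pairwise union passes. No such site exists: the family has $2^{n-o(n)}$ members indexed essentially bijectively by balanced bipartitions of $[n]$, and the triangle created by a pair $(P_1,P_2)$ lives where their bipartitions disagree --- a location that varies across pairs and sweeps the whole vertex set. A gadget of size $O_\ell(1)$ attached at one place cannot interact with triangles forming far away from it, so it cannot promote every forced $C_3$ into a $C_{2^\ell+1}$. You flag this as ``the combinatorial heart'' to be verified, but as stated it rests on a false structural premise rather than an omitted check. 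The paper also signals a different route: it says the \emph{method of the construction} in \cite{triangle} was generalized in \cite{oddcycle}, i.e.\ the $C_3$-creating construction is re-engineered globally so that pairwise unions directly contain a $C_{2^\ell+1}$, not post-processed by a local gadget. The doubling pattern $2^{\ell+1}+1 = 2(2^\ell+1)-1$ you noticed is presumably what makes only lengths $2^\ell+1$ tractable, but it must be built into the encoding of bipartitions by alternating paths, not bolted on as an attachment.
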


\noindent It is conjectured in \cite{oddcycle} that the behaviour of $H(n,2k+1)$ is similar for other values of $k$.  

\begin{conj}[\cite{triangle}]\label{conj:oddcycle}
For every integer $k \geq 1$, $$H(n,2k+1)=2^{n-o(n)}.$$
\end{conj}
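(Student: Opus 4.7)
\section*{Proof proposal for Conjecture \ref{conj:oddcycle}}

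The upper bound $H(n,2k+1)\le 2^{n-1}$ is essentially free and identical to the odd-cycle case already discussed in the paper. By the observation made just after the definition of $H(n,k)$, we may restrict attention to Hamiltonian paths. Every Hamiltonian path on $[n]$ is bipartite and has a \emph{unique} balanced bipartition $V=A\sqcup B$ (the even- and odd-indexed vertices along the path). If two such paths $P_1,P_2$ in our family shared the same bipartition, then $P_1\cup P_2$ would lie inside the bipartite graph $K_{A,B}$, hence contain no odd cycle, contradicting $C_{2k+1}\subseteq P_1\cup P_2$. Since $[n]$ admits only $\binom{n}{\lfloor n/2\rfloor}\le 2^{n-1}$ balanced bipartitions, we obtain $H(n,2k+1)\le 2^{n-o(n)}$.

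For the lower bound I would try to extend the construction of \cite{triangle} and \cite{oddcycle}. The idea is to assign to (almost) every balanced bipartition $(A,B)$ a canonical Hamiltonian path $P_{A,B}$ in which the alternation structure encodes $(A,B)$, and then show that for two bipartitions $(A_1,B_1)\ne(A_2,B_2)$ the union $P_{A_1,B_1}\cup P_{A_2,B_2}$ not only contains an odd cycle (this is forced by distinct bipartitions) but contains a cycle of length \emph{exactly} $2k+1$. A first step would be to prove a ``cycle-length flexibility'' lemma: in the union of two Hamiltonian paths with different bipartitions, once an odd cycle exists, a short local modification of one of the paths produces a $C_{2k+1}$ without altering its bipartition. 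One would then apply such a modification to a $2^{n-o(n)}$-size subfamily of bipartitions chosen to be robust under these local changes (e.g.\ bipartitions corresponding to binary strings with bounded runs or balanced block patterns, as in the $k=2^\ell$ construction).

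Concretely, I would proceed in three stages. First, recover the $C_3$ construction of \cite{triangle} and identify the ``switch gadget'' that produces the triangle in a pairwise union; then ask how this gadget deforms when the two paths are forced to create a longer odd cycle. Second, interpolate between the cases $k=2^\ell$ handled in \cite{oddcycle} and arbitrary $k$: write $2k+1$ in terms of the nearest power-of-two-plus-one and try to extend a $C_{2^\ell+1}$ appearing in some union into a $C_{2k+1}$ by adding a fixed short path. Third, verify that this extension, when performed coherently for all pairs in the family, still yields $2^{n-o(n)}$ paths.

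The main obstacle is exactly the difficulty mentioned in the paper: pinpointing the length of the odd cycle. It is easy to produce \emph{some} odd cycle in a union of two paths with distinct bipartitions, but constructing $2^{n-o(n)}$ paths with cycles of a prescribed odd length in every pairwise union seems to require algebraic structure (like the doubling exploited for $k=2^\ell$) that is absent for general $k$. Any successful proof will likely need a new construction principle, or a reduction showing that having an odd cycle anywhere in the union automatically forces a $C_{2k+1}$, and I would expect the second route to be where the real work lies.
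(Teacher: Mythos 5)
The statement you were asked to prove, Conjecture~\ref{conj:oddcycle}, is an \emph{open conjecture} in the paper, not a theorem: the paper does not prove it, and cites only the special cases $k=1$ (Theorem~\ref{thm:triangle}) and $2k+1=2^{\ell}+1$ (from \cite{oddcycle}) as evidence for it. So there is no proof in the paper to compare your attempt against.

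Your upper bound argument is correct and matches the reasoning the paper gives in its introduction: after reducing to Hamiltonian paths (via the observation following the definition of $H(n,k)$, which is the right and necessary first step since a short path does not determine a bipartition of all of $[n]$), each Hamiltonian path has a unique bipartition into its odd- and even-indexed vertices, two paths sharing that bipartition have a bipartite union and hence cannot create $C_{2k+1}$, and the number of balanced bipartitions of $[n]$ is $\binom{n}{\lfloor n/2 \rfloor}$ (or half that for even $n$), which is $2^{n-o(n)}$.

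The lower bound, however, is not a proof but a research plan, and you correctly identify the gap yourself. The entire difficulty of the conjecture is to force the odd cycle in each pairwise union to have length \emph{exactly} $2k+1$, and the ``cycle-length flexibility lemma'' you posit is precisely the missing ingredient, with no indication of how it would be proved. The known constructions (for $k=1$ and $k=2^{\ell-1}$, i.e.\ cycle lengths $3$ and $2^{\ell}+1$) rely on a doubling structure that does not obviously extend to general odd cycle lengths, which is exactly why the paper leaves this as a conjecture. Nothing in the proposal establishes a family of $2^{n-o(n)}$ pairwise $C_{2k+1}$-creating Hamiltonian paths for general $k$, so the statement remains unproved.
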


\noindent The behaviour of $H(n,2k)$ is very different from  the behaviour of $H(n,2k+1)$. Upper and lower bounds for $H(n,4)$ were established in \cite{original}. 
\begin{thm}[G. Cohen, E. Fachini, J. Körner \cite{original}]\label{thm:original} For every $n$,
$$ n^{\frac{1}{2}n-o(n)} \leq H(n,4) \leq n^{\frac{3}{4}n-o(n)}.$$
\end{thm}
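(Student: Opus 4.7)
The plan is to attack the two halves of this two-sided bound separately. For the lower bound $H(n,4) \ge n^{n/2-o(n)}$, I would exhibit $(n/2)!$ pairwise $C_4$-creating Hamiltonian paths via a zig-zag family: fix a balanced bipartition $V = A\cup B$ and an ordering $a_1, \ldots, a_{n/2}$ of $A$, and for each permutation $\sigma$ of $B$ take the Hamiltonian path $P_\sigma = a_1 b_{\sigma(1)} a_2 b_{\sigma(2)} \cdots a_{n/2} b_{\sigma(n/2)}$. To verify the $C_4$-creating property for $P_\sigma$ and $P_{\sigma'}$, I would focus on a vertex $b \in B$ appearing at different positions in $\sigma$ and $\sigma'$ and combine the edges incident to $b$ in the two paths with other edges in the union; if a particular pair fails the direct check, I would enlarge the construction by prepending a short universal gadget on $o(n)$ vertices common to all paths that forces a $C_4$ into every union at negligible cost in the count.

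For the upper bound $H(n,4) \le n^{3n/4-o(n)}$ the essential external ingredient is a very dense $C_4$-free bipartite regular graph $F$ on $n$ vertices; the Reiman construction (point-line incidences of a projective plane of order $q \sim \sqrt{n}$) furnishes one with $|E(F)| \sim \tfrac12 n^{3/2}$ and regular degree $\sim \sqrt{n}$. Given a pairwise $C_4$-creating family $\mathcal P = \{P_1, \ldots, P_N\}$, the plan is to apply a uniformly random relabeling of vertices so that on average each path has a predictable intersection profile with $F$, and then encode each $P_i$ by a codeword of length at most $\tfrac34 n \log n + o(n \log n)$ bits that is injective on $\mathcal P$. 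The $C_4$-freeness of $F$ forces any $C_4$ in $P_i \cup P_j$ to use an edge outside $F$, and I would construct the codeword so that this out-of-$F$ edge, together with the rest of the data, separates $P_i$ from $P_j$; summing over possible codewords would then give the bound.

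The hard part will be designing such a codeword. I need to extract, from each path, enough data to be injective across $C_4$-creating families while staying under the $\tfrac34 n \log n$ budget, which requires exploiting both the $C_4$-freeness of $F$ and the acyclicity of each $P_i$ simultaneously. The present paper generalizes this balancing to the higher-girth Reiman--Benson--Lazebnik--Ustimenko--Woldar $C_{2k}$-free graphs, producing the more intricate exponent $1 - \tfrac{2}{3k^2 - 2k}$ of the abstract; for $k = 2$ this specializes to the $\tfrac34$ above.
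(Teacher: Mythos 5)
Your lower bound construction is essentially the one in the paper (Section~\ref{sec:lower}, specialized to $k=2$), and it works directly without any gadget: in $P_\sigma$ the vertex $b_{\sigma(i)}$ is adjacent to $a_i$ and $a_{i+1}$ for $i<n/2$, so if $\sigma\neq\sigma'$ there is some $i\le n/2-1$ with $\sigma(i)\neq\sigma'(i)$, giving the $4$-cycle $a_i\,b_{\sigma(i)}\,a_{i+1}\,b_{\sigma'(i)}\,a_i$ in $P_\sigma\cup P_{\sigma'}$. You should simply verify this rather than hedge with a ``universal gadget,'' which is unnecessary and would need its own justification that it does not inflate or collapse the count.

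For the upper bound there is a genuine gap. You correctly identify the Reiman incidence graph as an ingredient, but the ``random relabeling plus $\tfrac34 n\log n$-bit injective codeword'' plan is not an argument --- it is a target, and you concede you do not know how to hit it. The actual mechanism (Cohen--Fachini--K\"orner via Cibulka, generalized in this paper) is quite different and worth internalizing. First, one passes from Hamiltonian paths to perfect matchings: fix the vertices in the odd positions by pigeonhole (this costs a factor $n^{(1-1/k)n+o(n)}$, i.e.\ $n^{n/2+o(n)}$ for $k=2$), so that the surviving family lives on the alternate edges, which form a perfect matching on roughly $n$ vertices, and one shows (Claim~\ref{notused}) that the forced $C_4$ in any union uses only matching edges. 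This reduces the problem to bounding $M(m,4)$, the maximum number of pairwise $C_4$-creating perfect matchings. Second, one observes that the auxiliary graph on all perfect matchings (joining two if their union contains a $C_4$) is vertex transitive, so $\omega\le |V|/\alpha$; hence an upper bound on $M(m,4)$ follows from a \emph{lower} bound on the number of pairwise non-$C_4$-creating perfect matchings. Third, that independent set is produced by taking all perfect matchings inside the $C_4$-free Reiman graph, and their number is bounded from below by the van der Waerden permanent inequality (Egorychev--Falikman): an $r$-regular bipartite graph on $m$ vertices has at least $m^{(\log_m r)\,m/2-o(m)}$ perfect matchings. Combining these three steps yields $H(n,4)\le n^{3n/4-o(n)}$. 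Your sketch has none of these three steps --- the reduction to matchings, the vertex-transitive fractional bound, and the permanent lower bound --- and the encoding idea as stated does not substitute for them; without a concrete injection you cannot conclude.
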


An easy generalization of the construction in \cite{original} gives $n^{\frac{1}{k}-o(n)} \leq H(n,2k)$. In this paper we generalize the upper bound of Cohen, Fachini and Körner for longer even cycles.

\begin{thm} \label{thm:main}
For every positive integer $k \geq 2$, 
\begin{center}
\renewcommand*{\arraystretch}{1.8}
  \begin{tabular}{l l}
  $H(n,2k)\leq n^{\left( 1- \frac{1}{k^2} \right)n-o(n)}$ & when $k=2,3,5$ \\
   $H(n,2k)\leq n^{\left( 1- \frac{2}{3k^2-2k} \right)n-o(n)}$ & when $k \neq 2,3,5$ and $k$ is even \\
   $H(n,2k)\leq n^{\left( 1- \frac{2}{3k^2-3k} \right)n-o(n)}$ & when $k \neq 2,3,5$ and $k$ is odd \\
  \end{tabular}
\end{center}
\end{thm}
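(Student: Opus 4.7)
The plan is to adapt the encoding argument of Cohen--Fachini--K\"orner (the $k=2$ case) by using, in place of their dense $C_4$-free bipartite template, the densest known $C_{2k}$-free bipartite graph $\Gamma$ on $N+N$ vertices $(N=n/2)$. The three regimes in the statement correspond to three regimes of density of $\Gamma$: for $k\in\{2,3,5\}$ the incidence graphs of generalised polygons (Reiman, Benson) give degree $d\approx N^{1/k}$, driving the sharpest exponent $1-1/k^2$; for the remaining even (resp.\ odd) $k$, the Lazebnik--Ustimenko--Woldar construction provides $d\approx N^{2/(3k-2)}$ (resp.\ $N^{2/(3k-3)}$), driving the weaker exponents.

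\textbf{Steps 1--2 (Setup and encoding).} Let $\mathcal F$ be a family of pairwise $C_{2k}$-creating paths, which by the introductory observation we take to be Hamiltonian. Fix a balanced bipartition $[n]=A\sqcup B$ with $|A|=|B|=N$, and a graph $\Gamma\subseteq K_{A,B}$ as above. Define a map $\phi\colon\mathcal F\to\mathcal C$ recording each $P\in\mathcal F$ by its vertex-sequence along $P$, with a chosen set of ``marked'' edges (evenly spaced along $P$) required to lie in $\Gamma$. The number of possible encodings is at most $N!\cdot d^{m}\cdot n^{o(n)}$ for the right number $m$ of marks; substituting the three values of $d$ and optimising $m$ gives exactly the three exponents in the conclusion.

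\textbf{Step 3 (Injectivity via $C_{2k}$-freeness).} The crux is to show $\phi$ is essentially injective. Suppose $P_1\ne P_2$ satisfy $\phi(P_1)=\phi(P_2)$; then their symmetric difference consists of marked edges, all lying in $\Gamma$. By the $C_{2k}$-creating property, $P_1\cup P_2$ contains a copy of $C_{2k}$, and the placement of the marks is designed so that this $C_{2k}$ must in fact lie in the symmetric difference -- hence in $\Gamma$, contradicting $C_{2k}$-freeness. This forces distinct paths to have distinct encodings, and hence bounds $|\mathcal F|$ by the count from Step~2.

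\textbf{Main obstacle.} The decisive difficulty is Step 3: choosing the marking so that the $C_{2k}$ guaranteed by the pairwise $C_{2k}$-creating property is inescapably a $C_{2k}$ of $\Gamma$. A $2k$-cycle in $P_1\cup P_2$ can in principle mix edges from both paths and both marked/unmarked positions, so a careful analysis -- controlling how consecutive marks along each $P_i$ are separated, and how an alternating closed walk in $P_1\triangle P_2$ can have length exactly $2k$ -- is needed to force the cycle to align with the marked edges and thus live in $\Gamma$. The parity of $k$ and the particular construction of $\Gamma$ both enter this analysis, which is what produces the split into three cases in the statement.
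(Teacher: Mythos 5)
Your proposal diverges from the paper's argument in a way that leaves a genuine gap. In Step~2 you require the ``marked'' edges of every Hamiltonian path to lie in a fixed $C_{2k}$-free bipartite graph $\Gamma$. But $\Gamma$ is sparse (it has $N^{1+t-o(1)}$ edges for small $t$), whereas the family $\mathcal F$ consists of arbitrary Hamiltonian paths in $K_n$; a typical Hamiltonian path has no edge in $\Gamma$ at all, so the map $\phi$ is not even defined on most of $\mathcal F$, and there is no choice of marking that makes arbitrary paths comply. Moreover, even granting that $\phi(P_1)=\phi(P_2)$ forced $P_1\triangle P_2\subseteq\Gamma$, the $C_{2k}$ guaranteed in $P_1\cup P_2$ need not lie in the symmetric difference: it can use shared unmarked edges, so $C_{2k}$-freeness of $\Gamma$ gives no contradiction. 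You flag this second point yourself as ``the decisive difficulty,'' but it is not a difficulty of tuning the marking --- it is the reason a direct encoding with a $C_{2k}$-free template does not work on Hamiltonian paths.

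The paper uses $\Gamma$ in essentially the opposite role. It first reduces $H(n,2k)$ to $M(n,2k)$, the maximum number of pairwise $C_{2k}$-creating perfect matchings, via Lemma~\ref{intomatching}: each Hamiltonian path is cut into consecutive blocks of $k$ vertices, the blocks are split into three residue classes, and a pigeonhole on the resulting triple of fixed subgraphs $F$ selects a subfamily sharing $F$; Claim~\ref{notused} then shows no edge of $F$ can appear in any $C_{2k}$ of a union, so deleting $F$ yields perfect matchings that remain pairwise $C_{2k}$-creating. Next it bounds $M(n,2k)=\omega(G_{PM}(C_{2k}))$ by the vertex-transitive inequality $\alpha\cdot\omega\le |V|$, which converts the problem into a \emph{lower} bound on the number of pairwise \emph{non}-$C_{2k}$-creating perfect matchings. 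Only there does the dense regular bipartite $C_{2k}$-free graph $\Gamma$ enter: any two perfect matchings of $\Gamma$ have union inside $\Gamma$ and hence are automatically non-$C_{2k}$-creating, and the van der Waerden permanent theorem (Theorem~\ref{waerden}) shows a $d$-regular bipartite $\Gamma$ on $n$ vertices has $n^{(c/2)n-o(n)}$ perfect matchings when $d=n^{c-o(1)}$. None of these three ingredients --- the block/triple reduction to matchings, the $\alpha\omega\le|V|$ step, and the permanent lower bound --- appears in your sketch, and the place you assign $\Gamma$ (an encoding template for the paths themselves) is not one it can fill.
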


Observe that the case when $k=2$ gives the upper bound of Theorem \ref{thm:original}. The reason why the upper bounds are different in the three cases is that for the proof we need the existence of bipartite, regular, $C_{2k}$-free graphs with many edges, and for different values of $k$, the order of magnitude of the number of edges for the known constructions is different, see Table \ref{results}.

The paper is organized as follows. In the short second section we present simple constructions for lower bounds on $H(n,2k)$. In Section \ref{sec:main} we prove Theorem \ref{thm:main}, in Section \ref{sec:reversing} we elaborate on the connection between $M(n,4)$ and the maximal number of pairwise reversing permutations (to be defined later). Finally in Section \ref{sec:concluding} we highlight the similarities between the proof of the present paper and the results of Cibulka, Cohen, Fachini and Körner. Also in Section \ref{sec:concluding} we elaborate on the strong connections between $M(n,2k)$ and $H(n,2k)$.

\section{Lower bound} \label{sec:lower}

In this short section we present a simple lower bound on $H(n,2k)$. 
\begin{claim}For every $k$,  $n^{\frac{1}{k}n-o(n)} \leq H(n,2k).$
\end{claim}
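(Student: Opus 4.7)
The plan is to exhibit an explicit family of roughly $(n/k)!$ Hamiltonian paths whose pairwise unions each contain a copy of $C_{2k}$; since $(n/k)! = n^{n/k - o(n)}$ by Stirling, this gives the claimed lower bound. The case $k \nmid n$ is handled at the very end by running the construction on $\lfloor n/k \rfloor \cdot k$ vertices and appending the remaining $O(k)$ vertices as a fixed tail, which only changes the count by a factor absorbed in the $o(n)$ of the exponent.

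First I would set up the construction. Assume $k \mid n$, let $m = n/k$, and partition the vertex set into a ``spine'' $A = \{a_1, \dots, a_m\}$ together with $m$ disjoint ``blocks'' $B_1, \dots, B_m$ of size $k-1$, writing $B_j = \{b_j^1, \dots, b_j^{k-1}\}$. For each $\pi \in S_m$, define the Hamiltonian path
\[
P_\pi \;:\; a_1, b_{\pi(1)}^1, \dots, b_{\pi(1)}^{k-1}, a_2, b_{\pi(2)}^1, \dots, b_{\pi(2)}^{k-1}, \dots, a_m, b_{\pi(m)}^1, \dots, b_{\pi(m)}^{k-1}.
\]
Thus the spine vertices sit at fixed positions inside the path, and the permutation $\pi$ only selects which block $B_{\pi(i)}$ is inserted into the $i$-th gap between $a_i$ and $a_{i+1}$; the internal order of each block is fixed independently of $\pi$.

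Next I would verify the $C_{2k}$-creating property. Given distinct $\pi, \tau \in S_m$, there must exist an index $i < m$ with $\pi(i) \neq \tau(i)$, since two permutations agreeing on $[m-1]$ also agree on $m$. Set $\alpha = \pi(i)$ and $\beta = \tau(i)$. I claim that
\[
a_i, b_\alpha^1, b_\alpha^2, \dots, b_\alpha^{k-1}, a_{i+1}, b_\beta^{k-1}, b_\beta^{k-2}, \dots, b_\beta^1, a_i
\]
is a copy of $C_{2k}$ lying in $P_\pi \cup P_\tau$. The $2k$ vertices are distinct because $B_\alpha \cap B_\beta = \emptyset$ (as $\alpha \neq \beta$) and both are disjoint from the spine. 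The first half of the cycle is precisely the $i$-th segment of $P_\pi$ (from $a_i$ through $B_\alpha$ to $a_{i+1}$), and the second half is the reverse of the $i$-th segment of $P_\tau$, so all $2k$ edges lie in the union.

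There is no real obstacle in the argument: the construction is transparent, and the verification amounts to an edge-by-edge check plus a routine application of Stirling to turn $m!$ into $n^{n/k - o(n)}$. The only two small points to be careful about are forcing $i < m$ so that $a_{i+1}$ exists, and the disjointness $B_\alpha \cap B_\beta = \emptyset$, both of which are immediate.
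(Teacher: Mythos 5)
Your proposal is correct and takes essentially the same approach as the paper: both fix a spine of evenly spaced vertices, permute blocks of $k-1$ vertices (with fixed internal order) in the gaps, and observe that a coordinate on which two permutations disagree yields a $C_{2k}$ from the two distinct gap-segments between the same spine endpoints. Your parametrization (explicit spine/block labels, arguing $i<m$ via agreement on $[m-1]$) is a slightly cleaner write-up, but the construction and the way the $2k$-cycle is produced are identical to the paper's.
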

\begin{proof}
It is enough to construct a suitable family of Hamiltonian paths when $n \equiv 1 \mod{k}$ since $k$ is fixed and we can safely ignore a constant number of vertices. We build directed Hamiltonian paths the sole reason for this is that we can refer to the first or second etc. vertex of the path. For every $t=1,\ldots, (n-k-1)/k$, the $(tk+1)$-th vertex of every Hamiltonian path will be the vertex $tk+1 \in [n]$, we call these the \textit{fixed vertices}. Moreover, every Hamiltonian path will contain the following set of paths: $(2,3,\ldots,k), \ldots, (tk+2,tk+3, \ldots (t+1)k), \ldots (n-k+1,\ldots,n-1)$ directed towards the larger elements of $[n]$, we call these the \textit{fixed paths}. The only difference between the Hamiltonian paths will be the order of the fixed paths between the fixed vertices. For each permutation of these paths we will associate a Hamiltonian path which traverses the fixed paths in this order. (Recall that the fixed vertices have their positions fixed in every path.) Since the number of fixed paths is $(n-1)/k$, the number of Hamiltonian paths is $n^{\frac{1}{k}n-o(n)}.$ Two such paths are $C_{2k}$-creating by the following reasoning. If their permutations differ on the $i$-th coordinate then the vertices $(i-1)k$ and $ik$ are connected by a different fixed path of length $k$, in each Hamiltonian path. This results in a cycle of length $2k$ in the union. 
\end{proof}

Note that when $k=2$ the fixed paths consist of a single vertex. Also note that when $k>2$, the fixed paths of length $k-1$ can be used to enlarge the set of $C_{2k}$-creating Hamiltonian paths in the following way: If we have two sets of paths of length $k-1$ that have a $C_{2k}$ in their union, on the non fixed vertices, the two sets of paths can both be used as fixed paths. The resulting system of Hamiltonian paths will be $C_{2k}$-creating altogether. 

The author was not able to improve more than an exponential factor with this construction method. But he was not able to prove that this method can only improve an exponential factor either, see Question \ref{q:1} in Section \ref{sec:concluding}. 

\section{Upper bound} \label{sec:main}

The proof of the upper bound mimics the proof of the non-trivial upper bound by Cohen, Fachini and Körner for $C_4$-creating Hamiltonian paths  \cite{original}. Their proof takes a large set of $C_4$-creating Hamiltonian paths and produces a still large set of so called pairwise flipful permutations. (Two permutations are flipful if there are two coordinates where the two permutations have the same two elements but the order of these elements is different in the two permutations.) Then they use a theorem of Cibulka \cite{cib} to have an upper bound for the maximal number of pairwise flipful permutations. We will proceed similarly but instead of flipful permutations we will use $C_{2k}$-creating perfect matchings. (In section \ref{sec:reversing} we show that there is a connection between flipful permutations and pairwise $C_{4}$-creating perfect matchings. In Section \ref{sec:concluding} we further discuss the similarities between \cite{original}, \cite{cib} and the proof of Theorem \ref{thm:main}. ) 

\begin{defi}
Let $M(n,2k)$ be the maximal number of pairwise $C_{2k}$-creating perfect matchings of the complete graph $K_n$. 
\end{defi}

First let us establish a connection between $M(n,2k)$ and $H(n,2k)$.

\begin{lem} \label{intomatching}
For every fixed $k$, $$  n^{-(1-1/k)n-o(n)}  H(n,2k) \leq M(2n/k,2k). $$ 
\end{lem}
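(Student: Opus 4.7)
The plan is to associate to each Hamiltonian path a perfect matching on $2n/k$ vertices together with auxiliary data that jointly recover the path. Assume for simplicity that $k \mid n$ (this costs only an $n^{o(n)}$ factor). For each Hamiltonian path $P = v_1 v_2 \cdots v_n$, partition the positions $[n]$ into $n/k$ consecutive blocks of length $k$, and declare the first two positions of each block to be the ``matching positions''. Let $S(P) \subset [n]$ consist of the $2n/k$ vertices occupying matching positions, and set
\[ M(P) = \bigl\{\{v_{(i-1)k+1}, v_{(i-1)k+2}\} : 1 \le i \le n/k\bigr\}, \]
a perfect matching on $S(P)$. Let the auxiliary data $D(P)$ consist of the sequence of vertices at the $(1-2/k)n$ non-matching positions, the permutation assigning matching edges to block slots, and the per-slot orientations; then $(S, M, D)$ determine $P$.

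By Stirling's formula the total number of possible $(S, D)$ pairs is at most
\[\binom{n}{2n/k} \cdot \bigl((1-2/k)n\bigr)! \cdot (n/k)! \cdot 2^{n/k} \le n^{(1-1/k)n + o(n)}.\]
Applying the pigeonhole principle to a maximum family of $H(n, 2k)$ pairwise $C_{2k}$-creating Hamiltonian paths gives a subfamily $\mathcal F'$ of size at least $H(n,2k) \cdot n^{-(1-1/k)n - o(n)}$ sharing a common $(S_0, D_0)$. Since $(M, D_0)$ determines $P$, these paths produce pairwise distinct perfect matchings on the fixed $2n/k$-vertex set $S_0 \cong [2n/k]$.

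The main remaining step is to show that $\{M(P) : P \in \mathcal F'\}$ is pairwise $C_{2k}$-creating in $K_{S_0}$. Two paths $P_1, P_2 \in \mathcal F'$ agree on every non-matching position and on the bookkeeping $D_0$, so they differ only through their matching edges and the bridge edges adjacent to them. Since $P_1 \cup P_2$ contains a $C_{2k}$ by assumption, the aim is to transform this cycle into a $C_{2k}$ inside $M(P_1) \cup M(P_2)$ by contracting each maximal cycle segment that lies in a single block (whose internal path structure is identical in $P_1$ and $P_2$) into the matching edge of that block. The chief technical obstacle is to argue that this contraction yields a cycle of length exactly $2k$: the analysis must handle bridge edges between consecutive blocks, which involve matching vertices whose identity differs between $P_1$ and $P_2$, rule out collapsing to a shorter cycle, and verify that multiple visits of the $C_{2k}$ to the same block are handled correctly. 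This is where the interplay between the block size $k$ and the cycle length $2k$ does the real combinatorial work of the lemma.
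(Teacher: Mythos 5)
Your high-level strategy (pigeonhole on a $(1-1/k)n\log n$-bit descriptor, then read off a perfect matching on $2n/k$ vertices) matches the paper's, and your count $n^{(1-1/k)n+o(n)}$ is correct, but the descriptor you choose is the wrong one and the step you flag as ``the main remaining step'' is in fact false. The paper pigeonholes on the \emph{entire} induced path structure inside each length-$k$ block (plus a $3$-coloring of the blocks), so that the fixed set $F$ consists of all within-block edges; the extracted matching is then the set of \emph{between-block} edges $H\setminus F$, and Claim~4.~of the paper (proved via the $3$-coloring and a careful length count) shows any $C_{2k}$ in $H_1\cup H_2$ avoids $F$ entirely, hence lives inside the extracted matchings. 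You instead fix only the vertices at positions $3,\dots,k$ of each block and read the matching off positions $1,2$. That class is strictly coarser: it does not fix the within-block edges incident to positions $1$ and $2$, and the resulting matchings need not inherit the $C_{2k}$.

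Here is a concrete counterexample for $k=3$, $n=9$. Take
\[
P_1 = a\,b\,c\,d\,e\,f\,g\,h\,i,\qquad P_2 = a\,e\,c\,b\,d\,f\,g\,h\,i.
\]
Both have the same non-matching vertices $c,f,i$ at positions $3,6,9$, the same $S_0=\{a,b,d,e,g,h\}$, identity slot-assignment (edges listed by smallest vertex occupy slots $1,2,3$ in order) and all ``small-vertex-first'' orientations, so $(S(P_1),D(P_1))=(S(P_2),D(P_2))$. Their union contains the $6$-cycle $a\!-\!b\!-\!c\!-\!d\!-\!f\!-\!e\!-\!a$ (edges $ab,cd,ef$ from $P_1$; $ae,bd\mapsto df$ correction: edges $ae,df$ from $P_2$; $bc$ in both), so $P_1,P_2$ are $C_6$-creating. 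But $M(P_1)=\{ab,de,gh\}$ and $M(P_2)=\{ae,bd,gh\}$, whose union is a single $4$-cycle $a\!-\!b\!-\!d\!-\!e\!-\!a$ plus the doubled edge $gh$ --- it contains no $C_6$ at all. So the family $\{M(P):P\in\mathcal F'\}$ is \emph{not} pairwise $C_{2k}$-creating, and no contraction argument can repair this since there is literally no $C_6$ present in $M(P_1)\cup M(P_2)$.

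The underlying reason your approach cannot be made to work with this choice of matching is that a $C_{2k}$ in $H_1\cup H_2$ can use edges among the fixed positions $3,\dots,k$ and the ``bridge'' edges between position $2$ and position $3$; these are not matching edges and carry cycle length that cannot be ``contracted away'' without changing $2k$ to something else. The paper avoids this entirely by making the matching be the between-block edges and by introducing the $3$-class partition precisely so that a $C_{2k}$ can be shown to use \emph{no} within-block edge; that is the content of Claim~4.~and the only part of the lemma requiring real work. Your write-up correctly identifies that some such claim is needed, but the claim you need is false for your construction, so the decomposition itself must be changed, not just the verification.
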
  
\begin{proof} We first deal with the case where $n$ is even and divisible by $3k$ (we will reduce everything else to this case later). Let $\mathcal{H}$ be a set of pairwise $C_{2k}$-creating Hamiltonian paths of size $H(n,2k)$, on $n$ vertices. For every $1 \leq i \leq n$ we denote the $i$-th vertex of the Hamiltonian path $H \in \mathcal{H}$ by $\pi_H(i)$. For each Hamiltonian path $H$ we associate a triple of its subgraphs $(X_H^1,X_H^2,X_H^3)$ as follows. For every $1 \leq j \leq 3$ we define $X_H^j$ to be the induced subgraph of $H$ on the vertices $$ \bigcup_{i=0}^{n/(3k)-1} \{ \pi_H\big((j-1+3i)k+1\big),\pi_H\big((j-1+3i)k+2\big),\ldots, \pi_H\big((j-1+3i)k+k\big) \}. $$
Informally, if we partition the vertices of $H$ into consecutive subsets of size $k$, then $X_H^1$ is the induced subgraph of $H$ on the first plus the fourth plus the seventh etc. set of $k$ vertices. The useful feature of these associated triples will turn out to be that two paths with the same associated triple can only be $C_{2k}$-creating in a very specific way. The number of possible triples $(X_H^1,X_H^2,X_H^3)$ is $$\binom{n}{\underbrace{k,k,\ldots,k}_{n/k}} ((n/k)!)^{-1} \left(k! \right)^{n/k} \binom{n/k}{n/(3k),n/(3k),n/(3k)}$$
which can be seen by the following reasoning: We can partition the ground set into $n/k$ unordered parts of size $k$ in exactly $\binom{n}{k,k,\ldots,k} ((n/k)!)^{-1}$ ways, then we can choose a directed path of length $k$ in each partition in $\left(k! \right)^{n/k}$ ways, then we can partition these paths into three classes in  $\binom{n/k}{n/(3k),n/(3k),n/(3k)}$ ways. It is a routine calculation that 

$$\binom{n}{\underbrace{k,k,\ldots,k}_{n/k}} ((n/k)!)^{-1} \left(k! \right)^{n/k} \binom{n/k}{n/(3k),n/(3k),n/(3k)}  = n^{\left(1-1/k \right)n+o(n)}.$$

By the pigeon-hole principle, there is a subset $\mathcal{M}' \subseteq \mathcal{H}$ so that for every pair of Hamiltonian paths $H_1, H_2 \in \mathcal{M}'$, their associated triples are identical: $$\big(X_{H_1}^1,X_{H_1}^2,X_{H_1}^3\big)=\big(X_{H_2}^1,X_{H_2}^2,X_{H_2}^3\big)$$ and 
\begin{equation} \label{eq:pidgeon}
n^{-(1-1/k)n-o(n)}|\mathcal{H}|\leq |\mathcal{M}'|.
\end{equation}

Let $H \in \mathcal{M}'$ and let $F$ be the union of the three graphs $(X_H^1,X_H^2,X_H^3)$, thus $F$ is the disjoint union of $n/k$ directed paths on $k$ vertices. $F$ can be thought of the set of fixed edges since every edge of $F$ is contained in every Hamiltonian path in $\mathcal{M}'$. Since $\mathcal{M}'$ is a subset of $\mathcal{H}$, it consists of $C_{2k}$-creating Hamiltonian paths. We claim that in the union of any two Hamiltonian paths from $\mathcal{M}'$, no edge of $F$ is used in a cycle of length $2k$.

\begin{claim}\label{notused}
Let $H_1, H_2 \in \mathcal{M}'$ if $C$ is a (not necessarily circularly) directed cycle of length $2k$ in $H_1 \cup H_2$ then no edge of $C$ is in $F$.
\end{claim}
\begin{proof}[Proof of Claim \ref{notused}]
Recall that $F$ is a subgraph of $H_1 \cup H_2$ and $F$ is the disjoint union of $n/k$ paths, each on $k$ vertices, see Figure \ref{unionstructure}. Furthermore, every edge of $H_1 \cup H_2$ that is not in $F$, connects one endpoint of a path in $F$ from a set $X_H^j$ to a first point of a path in $F$ from a set $X_H^{j+1}$, for some $j$ (where $j+1$  is understood modulo $3$). See Figure \ref{unionstructure}. 

Suppose to the contrary that $e$ is an edge in both $C$ and $F$. Since $e$ is in $F$, it must be in $X_H^{j}$ for some $j \in \{1,2,3\}$. Thus a whole path $P(e)$ on $k$ vertices from $X_H^{j}$ must be in $C$. Therefore $C \setminus P(e)$ is a path on $k+2$ vertices that is edge disjoint from $P(e)$ and connects the endpoints of $P(e)$. Such a path must either contain an other whole path from $X_H^{j}$, or one path from $X_H^{j+1}$ and another from $X_H^{j-1}$. In the later case, $C \setminus P(e)$ contains at least $2k+2$ vertices, a contradiction. In the former case $C \setminus P(e)$ contains at least $k+4$ vertices: the two endpoints of $P(e)$, $k$ vertices from the other path in $X_H^{j}$ and two additional vertices since no edges connect two starting or two endpoints of different paths in $X_H^{j}$, a contradiction.

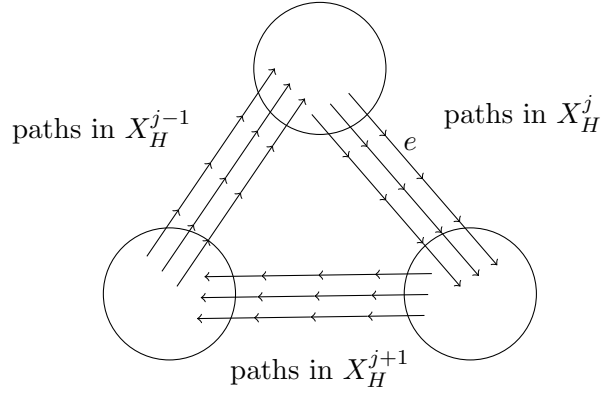
\begin{figure}

\begin{center}
\begin{tikzpicture}
\node at (3.2,2) {$e$};
\node at (4.7,2.4) {paths in $X_H^{j}$};
\node at (2,-1) {paths in $X_H^{j+1}$};
\node at (-0.9,2.2) {paths in $X_H^{j-1}$};

\draw[black] (0,0) circle (25pt);
\draw[black] (2,3) circle (25pt);
\draw[black] (4,0) circle (25pt);
\begin{scope}[shift={(-0.3,0)}]
\draw[->] (0,0.5) -- (1.7/4,0.5+2.5/4);
\draw[->] (1.7/4,0.5+2.5/4) -- (2*1.7/4,0.5+2*2.5/4);
\draw[->] (2*1.7/4,0.5+2*2.5/4)--(3*1.7/4,0.5+3*2.5/4);
\draw[->] (3*1.7/4,0.5+3*2.5/4)--(1.7,3);

\draw[->] (0.2,0.3) -- (0.2+1.7/4,0.3+2.5/4);
\draw[->] (0.2+1.7/4,0.3+2.5/4)--(0.2+2*1.7/4,0.3+2*2.5/4);
\draw[->] (0.2+2*1.7/4,0.3+2*2.5/4)--(0.2+3*1.7/4,0.3+3*2.5/4);
\draw[->] (0.2+3*1.7/4,0.3+3*2.5/4)--(0.2+4*1.7/4,0.3+4*2.5/4);

\draw[->] (0.4,0.1) -- (0.4+1.7/4,0.1+2.5/4);

\draw[->] (0.4+1.7/4,0.1+2.5/4) -- (0.4+2*1.7/4,0.1+2*2.5/4);
\draw[->] (0.4+2*1.7/4,0.1+2*2.5/4) -- (0.4+3*1.7/4,0.1+3*2.5/4);
\draw[->] (0.4+3*1.7/4,0.1+3*2.5/4) -- (0.4+4*1.7/4,0.1+4*2.5/4);
\end{scope}
\begin{scope}[shift={(1.9,2.8)},rotate=-105]

\draw[->] (0,0.5) -- (1.7/4,0.5+2.5/4);
\draw[->] (1.7/4,0.5+2.5/4) -- (2*1.7/4,0.5+2*2.5/4);
\draw[->] (2*1.7/4,0.5+2*2.5/4)--(3*1.7/4,0.5+3*2.5/4);
\draw[->] (3*1.7/4,0.5+3*2.5/4)--(1.7,3);

\draw[->] (0.2,0.3) -- (0.2+1.7/4,0.3+2.5/4);
\draw[->] (0.2+1.7/4,0.3+2.5/4)--(0.2+2*1.7/4,0.3+2*2.5/4);
\draw[->] (0.2+2*1.7/4,0.3+2*2.5/4)--(0.2+3*1.7/4,0.3+3*2.5/4);
\draw[->] (0.2+3*1.7/4,0.3+3*2.5/4)--(0.2+4*1.7/4,0.3+4*2.5/4);

\draw[->] (0.4,0.1) -- (0.4+1.7/4,0.1+2.5/4);

\draw[->] (0.4+1.7/4,0.1+2.5/4) -- (0.4+2*1.7/4,0.1+2*2.5/4);
\draw[->] (0.4+2*1.7/4,0.1+2*2.5/4) -- (0.4+3*1.7/4,0.1+3*2.5/4);
\draw[->] (0.4+3*1.7/4,0.1+3*2.5/4) -- (0.4+4*1.7/4,0.1+4*2.5/4);
\end{scope}

\begin{scope}[shift={(3.8,0)},rotate=125]

\draw[->] (0,0.5) -- (1.7/4,0.5+2.5/4);
\draw[->] (1.7/4,0.5+2.5/4) -- (2*1.7/4,0.5+2*2.5/4);
\draw[->] (2*1.7/4,0.5+2*2.5/4)--(3*1.7/4,0.5+3*2.5/4);
\draw[->] (3*1.7/4,0.5+3*2.5/4)--(1.7,3);

\draw[->] (0.2,0.3) -- (0.2+1.7/4,0.3+2.5/4);
\draw[->] (0.2+1.7/4,0.3+2.5/4)--(0.2+2*1.7/4,0.3+2*2.5/4);
\draw[->] (0.2+2*1.7/4,0.3+2*2.5/4)--(0.2+3*1.7/4,0.3+3*2.5/4);
\draw[->] (0.2+3*1.7/4,0.3+3*2.5/4)--(0.2+4*1.7/4,0.3+4*2.5/4);

\draw[->] (0.4,0.1) -- (0.4+1.7/4,0.1+2.5/4);

\draw[->] (0.4+1.7/4,0.1+2.5/4) -- (0.4+2*1.7/4,0.1+2*2.5/4);
\draw[->] (0.4+2*1.7/4,0.1+2*2.5/4) -- (0.4+3*1.7/4,0.1+3*2.5/4);
\draw[->] (0.4+3*1.7/4,0.1+3*2.5/4) -- (0.4+4*1.7/4,0.1+4*2.5/4);
\end{scope}
\end{tikzpicture}
\caption{The edges of $F$ in $H_1 \cup H_2$. All the other edges of $H_1 \cup H_2$ are in one of the three circles. The expressions $j+1$ and $j-1$ are understood modulo $3$. }
\label{unionstructure}
\end{center}
\end{figure}

\end{proof}

Let $$\mathcal{M}:= \{H \setminus F : H \in \mathcal{M}' \}. $$
$\mathcal{M}$ is a set of perfect matchings on $2n/k-2$ vertices. The union of every pair of matchings in $\mathcal{M}$ contains a cycle of length $2k$ since their original Hamiltonian paths had such a cycle in their union and by Claim \ref{notused} we only deleted edges that are not used in a cycle of length $2k$. Therefore $|\mathcal{M}|\leq M(2n/k-2,2k) \leq M(2n/k,2k)$, this combined with (\ref{eq:pidgeon}) yields

$$ n^{-(1-1/k)n-o(n)}|\mathcal{H}|\leq |\mathcal{M}'|=  |\mathcal{M}| \leq M(2n/k,2k)$$
as claimed. Thus the proof is complete when $n$ is even and divisible with $3k$.

We deal with the case where $3k$ does not divide $n$, by proving
\begin{equation}\label{eq:reducing} M(n,2k) \leq (n-1)M(n-2,2k). 
\end{equation} Since applying (\ref{eq:reducing}) at most a constant number of times, we can ensure that the ground set is even and divisible by $3k$. We prove (\ref{eq:reducing})  as follows. Let $\mathcal{M}$ be a set of pairwise $C_{2k}$-creating perfect matchings on $n$ vertices. Every perfect matchings connects the vertex $1 \in [n]$, to an other vertex from the remaining $(n-1)$ ones. By the pigeon-hole principle, there is a vertex  $i \in [n] \setminus \{1\}$ that is the neighbor of $1$ in at least $|\mathcal{M}|/(n-1)$ perfect matchings. Let $\mathcal{M}'$ be the subset of $\mathcal{M}$, that consists of those perfect matchings that connect $1$ to $i$. Observe that in the union of two perfect matchings from $\mathcal{M}'$, there must be a $C_{2k}$, since $\mathcal{M}' \subseteq \mathcal{M}$. Finally observe that in the union of two perfect matchings from $\mathcal{M}'$, $1$ and $i$ always form a connected component of size two, hence they can be deleted without destroying the $C_{2k}$-creating property. Therefore the proof is complete.

\end{proof}

Now we aim for an upper bound on $M(n,2k)$. Let $G_{PM}(C_{2k}) $ be the graph whose vertices correspond to perfect matchings on $[n]$ and two vertices of $G$ are adjacent if the corresponding perfect matchings are $C_{2k}$-creating.  Clearly $\omega(G_{PM}(C_{2k}))=M(n,2k)$. It is well known that  for every vertex transitive graph $G$, $\alpha(G) \omega(G) \leq |V(G)|$, see \cite{fractional}. (It is easy to prove that the fractional chromatic number $\chi_f (G)$  of such a graph is exactly $|V(G)|/\alpha(G)$ and clearly $\omega(G) \leq \chi_f(G)$.) Since $G_{PM}(C_{2k})$ is vertex transitive we have 
\begin{equation} 
\alpha(G_{PM}(C_{2k})) \omega(G_{PM}(C_{2k})) \leq |V(G_{PM}(C_{2k}))|
\end{equation}
or equivalently 
\begin{equation} \label{eq:vertextrans}
M(n,2k)=\omega(G_{PM}(C_{2k})) \leq \frac{|V(G_{PM}(C_{2k}))|}{\alpha(G_{PM}(C_{2k}))}.
\end{equation}

Thus we will prove an upper bound on $M(n,2k)$ by proving a lower bound to the number of pairwise non-$C_{2k}$-creating perfect matchings on $n$ vertices and using (\ref{eq:vertextrans}). We construct a large set of pairwise non-$C_{2k}$-creating perfect matchings by constructing a $C_{2k}$-free graph and proving that there are many perfect matchings in this graph. For this we will need bipartite, regular, $C_{2k}$-free graphs with many edges. 

Such constructions are often used to give lower bounds to the Turán number of even cycles. The bipartiteness and regularity properties are not required when one aims to give lower bounds to the Turán number of an even cycle. But for our method they will be essential! These constructions have an algebraic nature and they require that the number of vertices is special in some way. In Table~\ref{results} we summarize the current best constructions for bipartite, regular, $C_{2k}$-free graphs.

\begin{table}[htbp]
\begin{center}
\renewcommand*{\arraystretch}{1.8}
  \begin{tabular}{l c c c}
    \hline
  authors & 2k  & degrees & density     \\ \hline
Reiman, see below &  4  &$n^{\frac{1}{k}-o(1)}$ & $n/2=\sum_{i=0}^{2}q^i$, for $q$ a prime power \\
Benson \cite{benson} & 6  & $n^{\frac{1}{k}-o(1)}$ & $n/2=\sum_{i=0}^{3}q^i$, for $q$ a prime power  \\
Lazebnik, Ustimenko, Woldar \cite{ingredient}  &  8 & $n^{\frac{2}{3k-2}}$  & $n/2=q^{3}$  \\
Benson \cite{benson} & 10  & $n^{\frac{1}{k}-o(1)}$ & $n/2=\sum_{i=0}^{5}q^i$ , for $q$ an odd prime power  \\
Lazebnik, Ustimenko, Woldar \cite{ingredient}  & $2(2\ell)$ & $n^{\frac{2}{3k-2}}$  & $n/2=q^{2k-4-\left\lfloor  \frac{2k-3}{4}\right\rfloor}$  \\
Lazebnik, Ustimenko, Woldar \cite{ingredient}  & $2(2\ell+1)$ & $n^{\frac{2}{3k-3}}$ & $n/2=q^{2k-4-\left\lfloor  \frac{2k-3}{4}\right\rfloor}$    \\ \hline
  \end{tabular}
\end{center}
\caption{The order of magnitude of the degrees in regular, bipartite, $C_{2k}$-free graphs on $n$ vertices. The density column indicates that such constructions are known only for special ground sets. But in all cases, the set of numbers for which there are such constructions will turn out to be dense enough for all our purposes.}
\label{results}
\end{table}

We sketch the construction for the $C_4$-free case. 

\begin{claim}[\cites{reiman,cib}]
If $n/2=q^2+q+1$ then there is a bipartite, $n^{\frac{1}{2}-o(n)}$ regular, $C_{4}$-free graph on $n$ vertices. 
\end{claim}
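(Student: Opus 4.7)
The plan is to realize the desired graph as the point-line incidence graph of the projective plane $\mathrm{PG}(2,q)$. Recall that this plane has exactly $q^2+q+1$ points and $q^2+q+1$ lines, with each line containing $q+1$ points and each point lying on $q+1$ lines, and with the crucial property that any two distinct points determine a unique line (and dually). I would form a bipartite graph $G$ on vertex set $P \cup L$, where $P$ is the set of points and $L$ is the set of lines, and connect $p \in P$ to $\ell \in L$ whenever $p$ is incident with $\ell$.

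First I would check the numerics: $|V(G)| = 2(q^2+q+1) = n$, and bipartiteness is immediate. Regularity of degree $q+1$ follows directly from the two counting identities for $\mathrm{PG}(2,q)$ stated above. For the $C_4$-freeness, suppose to the contrary that $p_1, \ell_1, p_2, \ell_2$ is a $4$-cycle with $p_1, p_2 \in P$ and $\ell_1, \ell_2 \in L$. Then $p_1$ and $p_2$ would both lie on $\ell_1$ \emph{and} on $\ell_2$, contradicting the fact that two distinct points determine a unique line of $\mathrm{PG}(2,q)$.

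Finally, to verify the degree bound $n^{1/2 - o(1)}$, note that $n = 2(q^2+q+1)$ gives $q = (1+o(1))\sqrt{n/2}$, so the common degree $q+1$ is $(1+o(1))\sqrt{n/2} = n^{1/2 - o(1)}$, as required.

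The main conceptual obstacle is nothing more than invoking the existence of $\mathrm{PG}(2,q)$ for every prime power $q$ (constructed, for example, as the lattice of $1$- and $2$-dimensional subspaces of $\mathbb{F}_q^3$); once this is taken as given, each of the three properties (bipartite, regular, $C_4$-free) reduces to a one-line observation about the incidence structure. The only subtlety worth flagging is that the density of prime powers ensures that suitable values of $q$ (and hence of $n$) occur densely enough on the integers for the applications in the rest of the paper.
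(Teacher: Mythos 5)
Your proof is correct and follows essentially the same approach as the paper: forming the point--line incidence graph of a projective plane $\mathrm{PG}(2,q)$, which is bipartite, $(q+1)$-regular, and $C_4$-free because two points determine a unique line. Your write-up is in fact slightly more careful than the paper's sketch, since you explicitly carry out the degree estimate $q+1 = n^{1/2 - o(1)}$.
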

\begin{proof}[Sketch of proof] 
A finite projective plane of order $N$ has $N^2+N+1$ points and the same number of lines. Every point is incident to $N+1$ lines and every line contains $N+1$ points. Every pair of points is contained in exactly one line. Projective planes exist when $N=q^2+q+1$ where $q$ is a prime power. A bipartite regular $C_4$-free graph can be obtained from a projective plane as follows: Let the vertices of one of the color classes be the points of the plane, the vertices of the other class be the lines of the plane. Two vertices corresponding to a point and a line are adjacent when the point is contained in the line. The graph has exactly $2N^2+2N+2$ vertices and is $N+1$ regular. This graph is $C_4$-free as every pair of vertices is contained in a single line. \end{proof}

We introduce a notation so that we can refer to the results of Table \ref{results} in a simple, unified way. 

\begin{defi}
Let $t(x)$ denote the exponent of $n$ in the third column of Table \ref{results} in the row where $k=x$.
\end{defi}

For example $t(4)=2/(3k-2)$. To show that these graphs contain many perfect matchings, we need the following results. 

\begin{thm}\label{waerden}(van der Waerden's conjecture, Gyires-Egorychev-Falikman theorem \cite{egor} \cite{falik} \cite{gyires})
If $A$ is an $n \times n$ matrix where the sum of every row and column is $1$ (a doubly stochastic matrix) then 
\[Per(A)\geq \frac{n!}{n^n}\]
where $Per(A)$ is the permanent of $A$. 
\end{thm}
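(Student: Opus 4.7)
The plan is to follow the Egorychev--Falikman argument, which reduces the bound to Alexandrov's inequality for mixed discriminants of positive semidefinite matrices. The starting point is the classical identity expressing the permanent as a mixed discriminant: writing $c_1,\ldots,c_n$ for the columns of $A$, one has $\mathrm{Per}(A)=n!\,D(\mathrm{diag}(c_1),\ldots,\mathrm{diag}(c_n))$. Alexandrov's inequality
\[ D(M_1,M_2,M_3,\ldots,M_n)^2 \;\geq\; D(M_1,M_1,M_3,\ldots,M_n)\,D(M_2,M_2,M_3,\ldots,M_n) \]
for positive semidefinite $M_i$, together with its equality characterization, then implies that $\mathrm{Per}(A)$ is log-concave in each column when the remaining columns are held fixed.

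With this tool in hand, I would minimize $\mathrm{Per}$ over the doubly stochastic polytope $\Omega_n$, which is compact, so a minimizer $A^\ast$ exists. The goal is to prove $A^\ast=J/n$ (the uniform matrix), which would give $\mathrm{Per}(A^\ast)=n!/n^n$. First I would show $A^\ast$ has no zero entry: if $A^\ast_{ij}=0$, Birkhoff's theorem decomposes $A^\ast$ as a convex combination of permutation matrices, and one can construct a feasible perturbation that shifts a small amount of mass into position $(i,j)$ while preserving all row and column sums. Log-concavity along this perturbation, combined with an explicit derivative computation, forces a strict decrease of $\mathrm{Per}$, contradicting minimality. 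Hence $A^\ast$ lies in the relative interior of $\Omega_n$ and the KKT conditions apply.

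Using $\partial \mathrm{Per}(A)/\partial A_{ij}=\mathrm{Per}(A^{ij})$, where $A^{ij}$ denotes the $(i,j)$-minor, the KKT conditions read $\mathrm{Per}(A^{\ast ij})=\mu_i+\nu_j$ for row and column multipliers. Substituting this into the row expansion $\sum_i A_{ij}\,\mathrm{Per}(A^{ij})=\mathrm{Per}(A)$ and exploiting double stochasticity forces all the $\mu_i$ equal and all the $\nu_j$ equal, so all $(i,j)$-minor permanents of $A^\ast$ coincide. Feeding this equality of minors into the equality case of Alexandrov's inequality, applied pair by pair to the columns of $A^\ast$, yields that any two columns of $A^\ast$ are proportional; the symmetric argument on rows, combined with double stochasticity, pins down $A^\ast=J/n$.

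The main obstacle is Alexandrov's inequality itself along with its equality case, which is a substantial result from convex geometry and forms the technical heart of both Egorychev's and Falikman's original proofs. Since the theorem is invoked in the present paper only as a black box for bounding permanents of regular bipartite adjacency matrices scaled to be doubly stochastic, I would import Alexandrov's inequality without proof and concentrate the exposition on the reduction from a minimizer of $\mathrm{Per}$ on $\Omega_n$ to the uniform matrix $J/n$.
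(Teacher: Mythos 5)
The paper does not prove Theorem \ref{waerden}; it is a deep classical result (the resolution of van der Waerden's permanent conjecture, due independently to Egorychev and Falikman, with an earlier partial result by Gyires) that the author simply cites and then invokes as a black box inside Lemma \ref{manymatchings}. There is therefore no paper proof to compare against, and you were right to recognize that any argument here has to be imported from the literature rather than extracted from the text.

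Your sketch does follow the correct strategy --- Egorychev's reduction to Alexandrov's inequality for mixed discriminants, then the equality case forcing the minimizer on the Birkhoff polytope to be $J/n$ --- but at least one step is not right as stated. The Lagrange-multiplier deduction does not by itself force all $\mu_i$ or all $\nu_j$ to coincide: substituting $\mathrm{Per}(A^{*ij})=\mu_i+\nu_j$ into the Laplace expansion along column $j$ and using $\sum_i a^*_{ij}=1$ only gives $\sum_i a^*_{ij}\mu_i = \mathrm{Per}(A^*) - \nu_j$, which is perfectly consistent with distinct $\mu_i$. The standard bridge here is London's lemma: a minimizer $A^*$ with no zero entries satisfies $\mathrm{Per}(A^{*ij})\geq \mathrm{Per}(A^*)$ for every $i,j$, and since each Laplace expansion exhibits $\mathrm{Per}(A^*)$ as a convex combination of the $\mathrm{Per}(A^{*ij})$, all these minor permanents must in fact equal $\mathrm{Per}(A^*)$. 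Only then does the Alexandrov equality case give proportional columns and, with double stochasticity, $A^*=J/n$. The ``no zero entry'' perturbation step likewise needs more care than a one-line log-concavity claim (this is where London's construction actually does its work). These gaps are immaterial for the present paper, since the theorem is, appropriately, a cited external result, but they would need to be filled to turn your sketch into a proof.
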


\begin{lem} \label{manymatchings}
If $c$ is a constant, $r=n^{c-o(1)}$ and $G=(A,B,E)$ is an $r$-regular bipartite graph on $n$ vertices, then $G$ contains at least $n^{\frac{c}{2}n-o(n)}$ perfect matchings.
\end{lem}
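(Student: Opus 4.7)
The plan is to reduce counting perfect matchings of $G$ to computing the permanent of a doubly stochastic matrix, and then apply the Egorychev--Falikman theorem (Theorem \ref{waerden}) stated just above.

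First, I would note that since $G$ is a bipartite $r$-regular graph, both color classes $A$ and $B$ have size exactly $n/2$. Let $M$ be the $(n/2) \times (n/2)$ biadjacency matrix of $G$, so the number of perfect matchings of $G$ equals the permanent $\mathrm{Per}(M)$. Since every row and every column of $M$ sums to $r$, the matrix $M/r$ is doubly stochastic, and permanents scale like $\mathrm{Per}(M) = r^{n/2}\,\mathrm{Per}(M/r)$.

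Next, I would apply Theorem \ref{waerden} to $M/r$, giving
\[
\mathrm{Per}(M) \;\geq\; r^{n/2}\cdot\frac{(n/2)!}{(n/2)^{n/2}}.
\]
The factor $(n/2)!/(n/2)^{n/2}$ is, by Stirling, of order $e^{-n/2}$ times a polynomial in $n$, which is $n^{-o(n)}$ because $e^{-n/2}$ translates to $n^{-n/(2\ln n)}$ in base $n$. Substituting $r = n^{c-o(1)}$ gives $r^{n/2} = n^{(c/2)n - o(n)}$, so the two factors combine to
\[
\mathrm{Per}(M) \;\geq\; n^{(c/2)n - o(n)},
\]
as claimed.

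There is essentially no obstacle: once one has the observation that the permanent of the biadjacency matrix counts perfect matchings and that $M/r$ is doubly stochastic, everything is a direct application of Van der Waerden's conjecture. The only bookkeeping care is to confirm that the Stirling factor $(n/2)!/(n/2)^{n/2}$, while exponentially small in $e$, only contributes an $n^{-o(n)}$ correction on the logarithmic-in-$n$ scale used throughout the paper, so it does not affect the $n^{(c/2)n - o(n)}$ bound.
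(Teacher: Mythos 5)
Your proof is correct and follows essentially the same route as the paper: relate the number of perfect matchings to the permanent of the biadjacency matrix, rescale by $r$ to obtain a doubly stochastic matrix, and invoke the Egorychev--Falikman theorem. The extra sentence explaining why $(n/2)!/(n/2)^{n/2}$ is only an $n^{-o(n)}$ correction is a nice clarification but does not change the argument.
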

\begin{proof}
In both color classes of $G$ let us fix an ordering of the vertices. Let $A$ be an $\frac{n}{2} \times \frac{n}{2}$ matrix where $a_{i,j}=1$ if and only if the $i$-th vertex of $A$ is adjacent to the $j$-th vertex of $B$. Clearly the number of perfect matchings of $G$ is equal to $Per(A)$. Let $A'=r^{-1}A$, clearly

\[Per(A)= (n^{c-o(1)})^{\frac{n}{2}} Per(A').  \]
Since the matrix $A'$ is doubly stochastic as $G$ was regular, by Theorem \ref{waerden} we have
\[Per(A)= (n^{c-o(1)})^{\frac{n}{2}} Per(A') \geq (n^{c-o(1)})^{\frac{n}{2}} \frac{(n/2)!}{(n/2)^{(n/2)}}=n^{\frac{c}{2}n-o(n)} \]
as claimed. 
\end{proof}

There are many theorems that can be used for our "the set of primes is dense enough" type argument. We (following in the footsteps of Cibulka) choose to use the most recent and most powerful one.

\begin{thm}\label{dense}(Baker-Harman-Pintz \cite{pintz})
For all large enough $n$, there is a prime in the interval $[n-n^{0.525},n].$
\end{thm}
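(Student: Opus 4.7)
The plan is to apply Harman's alternative sieve method, which was developed specifically for detecting primes in short intervals. Write $y = n^{0.525}$, set $A = (n-y,\, n] \cap \mathbb{Z}$, and define $S(A, z) := \#\{a \in A : p \mid a \Rightarrow p \ge z\}$; since every $a \in A$ satisfies $a \le n$, it suffices to show $S(A, n^{1/2}) > 0$, because any unsifted element is automatically prime. The first step is to apply Heath-Brown's identity to the characteristic function of the primes, decomposing the count of primes in $A$ into Type~I sums (linear: $\sum_d \alpha_d |A_d|$) and Type~II sums (bilinear: $\sum_{m \asymp M,\, \ell \asymp N} \alpha_m \beta_\ell \mathbf{1}_{m\ell \in A}$), where $A_d := \{a \in A : d \mid a\}$.

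The Type~I sums are controlled elementarily through $|A_d| = y/d + O(1)$, which is admissible for $d \le y^{1-\varepsilon}$. The Type~II sums are the analytic core: after opening the condition $m\ell \in A$ via Perron's formula, they reduce to integrals of products of Dirichlet polynomials $F(s)G(s)$ near $\mathrm{Re}(s) = 1/2$, and are controlled via mean-value theorems for Dirichlet polynomials (Huxley, Heath-Brown, Ivi\'c) combined with zero-density estimates for $\zeta(s)$. Together these yield a prescribed rectangle of $(M, N)$ for which the bilinear contribution is asymptotic to its expected value. With these two inputs in hand, I would iterate Buchstab's identity
\[ S(A, z_1) = S(A, z_2) - \sum_{z_2 \le p < z_1} S(A_p, p) \]
to unfold $S(A, n^{1/2})$ into a signed multiple sum of sifted quantities. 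Each term whose prime parameters lie in the Type~I/II region is evaluated asymptotically against its long-interval analogue; terms falling outside are discarded only when their signs are favorable for the desired lower bound on $S(A, n^{1/2})$.

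The hard part is the numerical optimization that certifies feasibility at the exponent $0.525$ precisely. After parametrizing each Buchstab summand by $\alpha_i := \log p_i / \log n$, the integration simplex splits into a \emph{decomposable} region, where one of the prime tuples can be reshuffled into a valid Type~II factorization $(M, N)$, and an \emph{undecomposable} region, where it cannot; one has to show that the total mass of the undecomposable unfavorably-signed terms is strictly smaller than the main term produced by the decomposable part. This verification is carried out patch by patch using the sharpest available mean-value and zero-density exponents, and the value $0.525$ is exactly the breakpoint achievable with current analytic technology. The bulk of the Baker--Harman--Pintz paper, beyond this general sieve framework, consists of this combinatorial-analytic bookkeeping, together with the derivation of particularly sharp Type~II bilinear estimates in the critical range of $(M, N)$.
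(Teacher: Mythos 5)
This statement is not proved in the paper at all: it is the Baker--Harman--Pintz theorem on primes in short intervals, quoted as a black box with a citation, and the paper even remarks that any of several weaker ``the primes are dense enough'' results would do (all that is used in Lemma \ref{matchingbound_alpha} is a prime in an interval of length $o(m)$ below $m$, which already follows from Hoheisel-type results or, in the form needed there, from $p_{n+1}-p_n=o(p_n)$). So there is no in-paper proof to compare yours against.

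As for your proposal itself: it is a faithful high-level description of how Baker, Harman and Pintz actually argue --- Harman's alternative sieve, Heath-Brown's identity splitting into Type~I and Type~II sums, mean-value and zero-density inputs for the bilinear ranges, Buchstab iteration with sign-aware discarding, and a final numerical certification over the decomposition simplex --- but it is a roadmap, not a proof. Every step that makes the exponent $0.525$ (rather than, say, $7/12+\varepsilon$) work is asserted rather than established: the sharp Type~II estimates in the critical $(M,N)$ range, the precise admissible region for asymptotic evaluation, and above all the patch-by-patch verification that the unfavorably-signed undecomposable mass is strictly below the decomposable main term. That verification occupies the bulk of the original paper and cannot be waved through; as written, your argument would equally well ``prove'' any exponent, which is a sign that the decisive content is missing. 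For the purposes of this paper you should simply cite the result, as the author does; if you want a self-contained argument, prove the much weaker statement that every interval $[n-f(n),n]$ with $f(n)=o(n)$ of a suitable classical shape contains a prime, which is all that Lemma \ref{matchingbound_alpha} consumes.
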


In the next lemma we prove that although we can only construct dense, regular, bipartite, $C_{2k}$-free graphs on vertex sets of special size, these sizes are dense enough. Observe that in Table \ref{results}, for every $k$, the requirement for $n$ can be strengthened into the form: '$n=g(p)$ for some polynomial $g(x)$ and prime $p$' (not prime power!). For example, when $2k=4$, a suitable choice is $g(x)=2x^2+2x+2$. For $2k=10$, there is a single exception since there the prime $2$ cannot be used, but this only gives a single error $n=31$ which does not influence our asymptotic results. 

\begin{lem} \label{matchingbound_alpha}
Let $k$ be an integer and $g(x)=g_{k}(x)$ a polynomial for which $\lim_{x \rightarrow \infty}g(x)=\infty$. Suppose that whenever $n=g(p)$ for some prime $p$, there is a bipartite, $C_{2k}$-free, $n^{t(k)}$-regular graph on $n$ vertices. In this case, there is a family $\mathcal{M}$ of pairwise non $C_{2k}$-creating perfect matchings on $n$ vertices satisfying $n^{\frac{1}{2t(k)}n-o(n)}\leq |\mathcal{M}|.$
\end{lem}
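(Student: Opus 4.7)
The key observation is that any collection of perfect matchings of a single $C_{2k}$-free graph is automatically a family of pairwise non-$C_{2k}$-creating matchings: the union of any two such matchings is a subgraph of the ambient $C_{2k}$-free graph, hence contains no $C_{2k}$. My plan is therefore to produce such a graph on essentially $n$ vertices out of the hypothesis and count its perfect matchings using Lemma \ref{manymatchings}.

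The first obstacle is that the hypothesis only supplies a graph when the vertex count equals $g(p)$ for a prime $p$, whereas the conclusion must hold for every (sufficiently large, even) $n$. I would resolve this with Theorem \ref{dense}: since $g$ has fixed degree, the Baker-Harman-Pintz bound guarantees, for each large enough $n$, a prime $p$ with $g(p)\leq n$ and $n-g(p) = O(n^{1-\delta})$ for some fixed $\delta>0$ depending only on $\deg g$. In particular $n-g(p)=o(n)$, and because the known constructions give bipartite graphs on an even number $g(p)$ of vertices (and only even $n$ can host any perfect matching at all), the leftover set has even size.

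Next I would apply Lemma \ref{manymatchings} to the hypothesized bipartite, $C_{2k}$-free, $g(p)^{t(k)-o(1)}$-regular graph $G$ on $g(p)$ vertices, obtaining at least $g(p)^{\frac{t(k)}{2} g(p) - o(g(p))} = n^{\frac{t(k)}{2}n-o(n)}$ perfect matchings of $G$, where the equality uses $g(p)=n-o(n)$. I would then fix an arbitrary perfect matching $F$ on the $n-g(p)$ leftover vertices and append $F$ to each perfect matching of $G$, producing a family $\mathcal{M}$ of perfect matchings on the full $n$-vertex ground set, of the same cardinality.

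It remains to verify that $\mathcal{M}$ is pairwise non-$C_{2k}$-creating. For $M_1,M_2\in\mathcal{M}$, the union $M_1\cup M_2$ decomposes as a vertex-disjoint union of two subgraphs: one lives inside $G$ and therefore contains no $C_{2k}$, while the other is just the matching $F$, which has no cycles at all. Vertex-disjointness forbids any $C_{2k}$ from bridging the two parts, so no $C_{2k}$ appears in $M_1\cup M_2$. I do not foresee a serious obstacle beyond this packaging: the content is entirely in Lemma \ref{manymatchings} and Theorem \ref{dense}, and the only real bookkeeping is confirming that the discrepancy $n-g(p)=o(n)$ is absorbed into the $-o(n)$ term in the exponent.
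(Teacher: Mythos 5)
Your proof is essentially the paper's: Baker--Harman--Pintz supplies a suitable $n'=g(p)\leq n$ with $n-n'=o(n)$, Lemma \ref{manymatchings} applied to the hypothesized $C_{2k}$-free graph on $n'$ vertices yields $n^{\frac{t(k)}{2}n-o(n)}$ perfect matchings, and padding each by a fixed matching on the $o(n)$ leftover vertices gives the family (with your explicit check that the union of any two stays inside the $C_{2k}$-free graph plus an acyclic matching). One remark: you correctly arrive at the exponent $\frac{t(k)}{2}n$, which is also what the paper's own proof derives and what propagates consistently into Theorem \ref{thm:main}; the exponent $\frac{1}{2t(k)}n$ in the lemma's \emph{statement} appears to be a typo, since for $t(k)\leq\frac12$ it would exceed the total count $n^{\frac{n}{2}-o(n)}$ of all perfect matchings on $n$ vertices.
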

\begin{proof} 
We say that a number $n$ is suitable when $n=g(p)$ for a prime $p$. Let $m>n_0$ be large enough for Theorem \ref{dense}, furthermore let $m$ be so large that $m-m^{0.525}$ is larger than the largest root of $g(x)$. By Theorem \ref{dense}, there is a prime $p$ in the interval $[m-m^{0.525},m]$. Since in this interval $g(x)$ is monotone increasing, there is a suitable $n$ in the interval $[g(m-m^{0.525}),g(m)]$ for every large enough $m$. 
\begin{equation} \label{eq:dense}
g(m)-g(m-m^{0.525})=o(g(m))
\end{equation} 
since for every fixed $k$, and $x$ tending to infinity $x^k-(x-x^{0.525})^k=o(x^k)$. Let now $n$ be large. Since $n$ is between $g(m)$ and $g(m+1)$ for some $m$, and $g(m+1)-g(m)=o(g(m))$ we have $n-g(m)=o(n)$. By Theorem \ref{dense}, there is a prime in the interval $[m-m^{0.525},m]$, thus there is a suitable integer in the interval $[g(m-m^{0.525}),g(m)]$. By (\ref{eq:dense}) the length of this interval is $o(g(m))=o(n)$. Therefore there is a suitable integer $n'$ such that $n'=n-o(n)$.

By our assumptions, there is a bipartite, $C_{2k}$-free, $(n')^{t(k)}$-regular graph $G$ on $n'$ vertices. By Lemma~\ref{manymatchings} there are at least $n'^{\frac{t(k)}{2}n'-o(n')}=n^{\frac{t(k)}{2}n-o(n)}$ perfect matchings in $G$. Since $n'=n-o(n)$, by adding $n-n'$ new vertices and a fixed matching on the new vertices to these, the proof is complete. 

\end{proof}

\begin{corr}\label{corr}
$$M(n,2k) \leq n^{\left(\frac{1}{2}-\frac{1}{2t(k)} \right)n-o(n)}.$$
\end{corr}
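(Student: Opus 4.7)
The plan is straightforward: combine the vertex-transitivity inequality (\ref{eq:vertextrans}) with an estimate for $|V(G_{PM}(C_{2k}))|$ and the lower bound on $\alpha(G_{PM}(C_{2k}))$ that is already packaged in Lemma~\ref{matchingbound_alpha}. All the serious work — constructing the dense bipartite regular $C_{2k}$-free graphs listed in Table~\ref{results}, applying the van der Waerden / Gyires-Egorychev-Falikman permanent bound (Theorem~\ref{waerden}) to turn regularity into many perfect matchings, and handling the number-theoretic restrictions on $n$ via Baker-Harman-Pintz (Theorem~\ref{dense}) — has been absorbed into the earlier lemmas, so the corollary is really just an algebraic combination.

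First I would count $|V(G_{PM}(C_{2k}))|$, which is simply the number of perfect matchings of $K_n$. For even $n$ this equals $(n-1)!! = \frac{n!}{2^{n/2}(n/2)!}$, and a direct Stirling estimate yields
\[
|V(G_{PM}(C_{2k}))| = n^{\frac{1}{2}n - o(n)}.
\]
For odd $n$ there are no perfect matchings, so one first reduces to the even case by deleting one vertex at a cost absorbed into the $o(n)$; alternatively, one works throughout with $n$ even, as in Lemma~\ref{intomatching}.

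Next I would invoke Lemma~\ref{matchingbound_alpha}: a family of pairwise non-$C_{2k}$-creating perfect matchings is by definition an independent set in $G_{PM}(C_{2k})$, so that lemma supplies the required lower bound on $\alpha(G_{PM}(C_{2k}))$ in the form $n^{\gamma(k) n - o(n)}$ where $\gamma(k)$ is the exponent delivered by the lemma. Substituting both estimates into (\ref{eq:vertextrans}) gives
\[
M(n,2k) \;\leq\; \frac{|V(G_{PM}(C_{2k}))|}{\alpha(G_{PM}(C_{2k}))} \;\leq\; \frac{n^{\frac{1}{2}n - o(n)}}{n^{\gamma(k) n - o(n)}} \;=\; n^{\left(\frac{1}{2}-\gamma(k)\right)n - o(n)},
\]
which after identifying $\gamma(k)$ with the exponent produced by Lemma~\ref{matchingbound_alpha} is precisely the bound claimed in the corollary.

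There is essentially no obstacle at this stage: the serious combinatorial and number-theoretic input is already contained in Lemmas~\ref{manymatchings} and~\ref{matchingbound_alpha} together with Table~\ref{results}. The only thing requiring any care here is bookkeeping of the $o(n)$ error terms when one divides the two estimates and exponentiates in base $n$, and verifying that the lemma applies for \emph{every} sufficiently large $n$ (not just for $n$ of the special form $g(p)$), which is exactly why the density argument based on Theorem~\ref{dense} was built into the hypothesis of Lemma~\ref{matchingbound_alpha}.
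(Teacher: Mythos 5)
Your proposal is correct and follows the paper's argument exactly: the paper likewise observes that a family of pairwise non-$C_{2k}$-creating perfect matchings is an independent set in $G_{PM}(C_{2k})$, invokes Table~\ref{results} together with Lemma~\ref{matchingbound_alpha} for the lower bound on $\alpha(G_{PM}(C_{2k}))$, recalls that $|V(G_{PM}(C_{2k}))| = n^{\frac{1}{2}n-o(n)}$, and divides via~(\ref{eq:vertextrans}). The extra remark you make about reducing odd $n$ to even $n$ is implicit in the paper but is a sensible piece of bookkeeping to spell out.
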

\begin{proof}
By Table \ref{results} and Lemma \ref{matchingbound_alpha} there is a set of pairwise not $C_4$-creating perfect matchings of size $n^{\frac{1}{2t(k)}n-o(n)}$ on $n$ vertices. It is well known that the number of perfect matchings on $n$ vertices is $n^{\frac{1}{2}n-o(n)}$, thus by (\ref{eq:vertextrans}) we have 
$$M(n,2k)\leq \frac{n^{\frac{1}{2}n-o(n)}}{n^{\frac{1}{2t(k)}n-o(n)}}=n^{\left(\frac{1}{2}-\frac{1}{2t(k)} \right)n-o(n)}. $$
\end{proof}

\noindent Finally we are ready to prove our main theorem. 

\begin{thm}\label{thm:main2}
For every fixed $k$ $$H(n,2k) \leq n^{\left(1-\frac{1}{k t(k)} \right)n-o(n)}. $$
\end{thm}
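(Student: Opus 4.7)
The plan is to simply combine Lemma \ref{intomatching} with Corollary \ref{corr}. First I would rearrange Lemma \ref{intomatching} into the form
\[
H(n,2k) \leq n^{(1-1/k)n + o(n)} \cdot M(2n/k,\,2k),
\]
which is legitimate because $n^{-(1-1/k)n - o(n)}$ on the left of the inequality in the lemma can be moved to the right as its reciprocal. Then I would apply Corollary \ref{corr} with $2n/k$ in place of $n$ to bound
\[
M(2n/k,\,2k) \leq (2n/k)^{\left(\frac{1}{2} - \frac{1}{2t(k)}\right)\frac{2n}{k} - o(n)}.
\]

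Next I would observe that since $2/k$ is a constant, $(2n/k)^{cn} = n^{cn - o(n)}$ for any constant $c$, so the base $2n/k$ can be replaced by $n$ at the cost of absorbing everything into the $o(n)$ exponent. This lets me rewrite the bound on $M(2n/k,2k)$ as $n^{\left(\frac{1}{2} - \frac{1}{2t(k)}\right)\frac{2n}{k} - o(n)} = n^{\left(\frac{1}{k} - \frac{1}{kt(k)}\right)n - o(n)}$.

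Finally I would multiply the two bounds and add the exponents:
\[
\left(1 - \frac{1}{k}\right) + \left(\frac{1}{k} - \frac{1}{kt(k)}\right) = 1 - \frac{1}{kt(k)},
\]
which gives exactly the claimed exponent $\left(1-\frac{1}{kt(k)}\right)n - o(n)$.

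There is no real obstacle here; Theorem \ref{thm:main2} is essentially a bookkeeping composition of the two preceding results. The only point that requires a moment of care is checking that changing the ground set size from $n$ to $2n/k$ (and back) does not introduce any hidden loss in the exponent, but since $k$ is fixed the constant factor $2/k$ is harmlessly swallowed by $o(n)$ in the exponent of $n$. Once one has verified Lemma \ref{intomatching} and Corollary \ref{corr}, deriving Theorem \ref{thm:main} (via Theorem \ref{thm:main2}) just amounts to reading off $t(k)$ from Table \ref{results} and substituting: $t(2)=t(3)=t(5)=1/k$ yields the $1-1/k^2$ bound, while $t(k)=2/(3k-2)$ and $t(k)=2/(3k-3)$ in the remaining even and odd cases produce the other two bounds.
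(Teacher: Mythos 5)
Your proposal is correct and matches the paper's proof exactly: it combines Lemma \ref{intomatching} with Corollary \ref{corr} applied at ground set size $2n/k$, absorbs the constant factor $2/k$ in the base into the $o(n)$ term of the exponent, and adds exponents to obtain $1-\frac{1}{kt(k)}$. Nothing to add.
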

\begin{proof}
By Lemma \ref{intomatching} we have \begin{equation} \label{eq:main1}  n^{-\left(1-\frac{1}{k}\right)n-o(n)}  H(n,2k) \leq M(2n/k,2k). 
\end{equation}

By Corollary \ref{corr}
\begin{equation} \label{eq:main2}
M(2n/k,2k) \leq (2n/k)^{\left(\frac{1}{2}-\frac{1}{2t(k)} \right)2n/k-o(n)}=n^{\left(\frac{1}{k}-\frac{1}{k t(k)} \right)n-o(n)}.
\end{equation}

Equations (\ref{eq:main1}) and (\ref{eq:main2}) together yield the claimed upper bound.
\end{proof}

Theorem \ref{thm:main} follows from Theorem \ref{thm:main2} and Table \ref{results}.

\section{Connection with reversing permutations} \label{sec:reversing}

\begin{defi}
We say that two permutations $\pi_1 , \pi_2$ of the elements $[n]$ are reversing if there are two coordinates $1 \leq i < j \leq n$ for which $\pi_1(i)=\pi_2(j)$ and $\pi_1(j)=\pi_2(i)$. Let $RP(n)$ denote the maximal number of pairwise reversing permutations of $[n]$. 
\end{defi}

In this short section we establish a connection between $M(n,4)$ and $RP(n/2)$. In \cite{original} the authors prove Theorem \ref{thm:original} using a relation between $RP(n/2)$ and $H(n,4)$, for more details see Section \ref{sec:concluding}. The following lemma states that ignoring exponential factors, the values $M(n,4)$ and $RP(n/2)$ are the same. 

\begin{claim} \label{claim:permcorrespondence} When $n$ is even, 
 $$2^{\frac{n}{2}}\binom{n}{\frac{n}{2}}^{-1}M(n,4) \leq RP\left(\frac{n}{2}\right) \leq M(n,4). $$
\end{claim}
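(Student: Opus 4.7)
\medskip

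\noindent\textbf{Proof plan.} The guiding intuition is that a permutation $\pi$ of $[n/2]$ is literally a perfect matching between an ordered left set and an ordered right set (the edge $i\leftrightarrow \pi(i)$), and under this correspondence the reversing property becomes exactly the presence of a $C_4$ in the union. Indeed if $\pi_1(i)=\pi_2(j)=a$ and $\pi_1(j)=\pi_2(i)=b$ with $i\ne j$, then the four edges $\{i,a\},\{j,b\}$ coming from $\pi_1$ and $\{j,a\},\{i,b\}$ coming from $\pi_2$ form a $C_4$ in the union, and conversely any $C_4$ in the union of two matchings between a common left and right set has this shape.

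For the easy inequality $RP(n/2)\le M(n,4)$, I would partition $[n]$ into $A=\{1,\dots,n/2\}$ and $B=\{n/2+1,\dots,n\}$ with the natural orderings, and to every permutation $\pi$ of $[n/2]$ associate the perfect matching $M_\pi=\{\{i,\,n/2+\pi(i)\}:i\in[n/2]\}$. The map $\pi\mapsto M_\pi$ is injective, and by the discussion above two reversing permutations give two $C_4$-creating matchings, so a family of $RP(n/2)$ pairwise reversing permutations yields a family of $RP(n/2)$ pairwise $C_4$-creating perfect matchings on $[n]$.

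For the harder inequality $2^{n/2}\binom{n}{n/2}^{-1}M(n,4)\le RP(n/2)$, the plan is a pigeonhole over balanced bipartitions. Start with a family $\mathcal{N}$ of $M(n,4)$ pairwise $C_4$-creating perfect matchings on $[n]$. For each matching $N\in\mathcal{N}$, an ordered balanced bipartition $(A,B)$ of $[n]$ with $|A|=|B|=n/2$ is \emph{compatible} with $N$ if every edge of $N$ has one endpoint in $A$ and one in $B$; since $N$ has $n/2$ edges and for each edge there are $2$ ways to place its endpoints, $N$ is compatible with exactly $2^{n/2}$ ordered balanced bipartitions. The total number of ordered balanced bipartitions is $\binom{n}{n/2}$, so by averaging there is some fixed ordered bipartition $(A,B)$ compatible with at least $M(n,4)\cdot 2^{n/2}/\binom{n}{n/2}$ matchings of $\mathcal{N}$. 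Fix the natural orderings of $A$ and $B$ (say increasing) and associate to each such matching $N$ the permutation $\pi_N$ of $[n/2]$ defined by: the $i$th vertex of $A$ is matched in $N$ to the $\pi_N(i)$th vertex of $B$. This assignment is injective, and any $C_4$ in the union of two compatible matchings is automatically bipartite with respect to $(A,B)$ (since both matchings are), hence has two vertices in $A$ and two in $B$, which translates exactly into a reversing pair of positions for the corresponding permutations. This produces at least $M(n,4)\cdot 2^{n/2}/\binom{n}{n/2}$ pairwise reversing permutations of $[n/2]$, giving the claimed lower bound.

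The only place to be careful is the bipartition count: one must verify that each perfect matching is compatible with exactly $2^{n/2}$ ordered balanced bipartitions (not $2^{n/2-1}$ nor some other variant), and use ordered rather than unordered bipartitions so that the orderings of $A$ and $B$ unambiguously define a permutation. Once that bookkeeping is pinned down, both inequalities are immediate from the $C_4\leftrightarrow$ reversing dictionary above, so I do not expect any serious obstacle beyond this counting.
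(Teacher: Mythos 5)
Your proof is correct and takes essentially the same route as the paper: the same bijection $\pi \mapsto \{\,\{i,\,n/2+\pi(i)\}\,\}$ for the easy inequality, and the same averaging over balanced bipartitions (each matching compatible with $2^{n/2}$ of the $\binom{n}{n/2}$ ordered bipartitions) followed by the corresponding bijection for the harder one.
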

\begin{proof}
 For a permutation $\pi$ of the elements $[n/2]$, let us associate the perfect matching $M(\pi)$ on the vertices $[n]$ that consists of the edges $(i,\pi(i)+n/2)$. Observe that two permutations of $[n/2]$ are reversing if and only if their associated matchings are $C_4$-creating. This proves the second inequality. Observe that this correspondence is a bijection between the set of permutations of $[n/2]$ and the set of perfect matchings on $[n]$ which have all of their edges between the sets $\{1, \ldots n/2\}$ and $\{n/2+1, \ldots n\}$.  The first inequality follows from the observation that given a set of $M(n,4)$ pairwise $C_4$-creating perfect matchings on $n$ vertices, the average number of perfect matchings that have their edges between $S \subset [n]$ and $[n] \setminus S$, averaging over every $|S|=n/2$ is $2^{\frac{n}{2}} \binom{n}{n/2}^{-1}M(n,4)$. And we can have a similar bijection between these perfect matchings and permutations of $[n/2]$ that takes a pair of $C_4$-creating perfect matchings into a pair of reversing permutations. 
\end{proof}

Thus from the proof of Claim \ref{claim:permcorrespondence} we see that $RP(n/2)$ can be viewed as a version of $M(n,4)$ where we restrict our matchings to have all their edges between two fixed subsets of $[n]$.

\section{Concluding remarks}\label{sec:concluding}

It might not be immediately apparent that the proof of Theorem \ref{thm:main2} in the case when $k=2$ is essentially equivalent to the proof of Theorem \ref{thm:original}. Let us elaborate on this equivalence. The proof of Theorem \ref{thm:main2} follows the following steps.
\begin{itemize}
\item Lemma \ref{intomatching} establishes a connection between $H(n,2k)$ and $M(n,2k)$.
\item (\ref{eq:vertextrans}) gives a very rough upper bound on $M(n,2k)$ using a lower bound on $\alpha(G_{PM}(C_{2k}))$.
\item We give a lower bound on $\alpha(G_{PM}(C_{2k}))$ using $C_{2k}$-free graphs that contain many perfect matchings. 
\end{itemize}
The essential equivalence in the $k=2$ case can be seen as follows. In \cite{original} the authors establish a connection between $H(n,4)$ and $RP(n/2)$ (recall that by Claim \ref{claim:permcorrespondence} we already know that $RP(n/2)$ is only an exponential factor away from $M(n,4)$). Their proof is generalized to Lemma \ref{intomatching}. Then the authors of \cite{original} refer to the upper bound on $RP(n/2)$ proved in \cite{cib} to conclude that $H(n,4) \leq n^{\frac{3}{4}n-o(n)}$. In \cite{cib} the author actually proves that the maximal number of pairwise non-reversing permutations of $[n/2]$ is equal to $n^{\frac{1}{4}n-o(n)}$. Then he uses (\ref{eq:vertextrans}) to prove an upper bound on $RP(n/2)$. Observe that for the upper bound on $RP(n/2)$ (and thus $H(n,4)$) we only need the lower bound on the number of pairwise non-reversing permutations! (The proof of the upper bound is much longer and much more difficult.) Our lower bound on the number of pairwise non $C_{2k}$-creating perfect matchings is a natural generalization of the lower bound in \cite{cib}. Therefore the main result of the present paper (Theorem \ref{thm:main2}) should be considered a natural generalization of the ideas of Cibulka, Cohen, Fachini and Körner which led to Theorem \ref{thm:original}.

In \cite{cib}, it is proven that the maximal number of pairwise non-reversing permutations of $[n/2]$ is $n^{\frac{1}{4}n-o(n)}$. Therefore using only \ref{eq:vertextrans} we cannot get a smaller upper bound on $M(n,4)$ than $n^{\frac{1}{4}n-o(n)}$. Or in other terms, the fractional clique number of $G_{PM}(C_{4})$ is $n^{\frac{1}{4}n-o(n)}$ (since for vertex transitive graphs $G$ the fractional clique number is $|V(G)|/\alpha(G)$) thus if the clique number is actually smaller, no method can prove it which would also work for the fractional clique number. 

We saw in Section \ref{sec:lower} that $H(n,2k)$ is larger than any exponential function of $n$, for every fixed $k$. In the constructions presented there, for every $k>2$, every Hamiltonian path contains a set of roughly $\frac{n}{k}$ paths on exactly $k-1$ vertices. Since every Hamiltonian path constructed there contains these fixed paths, we can add any Hamiltonian path which forms a $C_{2k}$ with these fixed paths. It is natural to try to add a set of Hamiltonian paths that is constructed similarly but the set of paths of length $k-1$ is different, moreover $C_{2k}$-creating from the original set of fixed paths. Let $\mathcal{P}_k(n)$ be the set of graphs on $n$ vertices that is the disjoint union of paths of length $k-1$. This is the motivation behind the following questions.

\begin{question}\label{q:1}
Let $k>2$. What is the maximal number of pairwise $C_{2k}$-creating graphs from $\mathcal{P}_k(n)$? Is the answer an exponential function of $n$?
\end{question}

The first non-trivial case of Question \ref{q:1} is: what is the maximal number of pairwise $C_6$-creating perfect matchings on $n$ vertices? Or in other words, what is the value of $M(n,6)$?  Although for larger $k$, $\mathcal{P}_k(n)$ contains longer paths it is not hard to see that for $k>3$, a larger than exponential lower bound for $M(n,2k)$ would result in larger than exponential lower bound for the number of pairwise $C_{2k}$-creating graphs from $\mathcal{P}_k(n)$. This, and Lemma \ref{intomatching} means that for $k$ at least $3$, better lower bounds for $M(n,2k)$ would lead to better constructions for $H(n,2k)$, and better upper bounds for $M(n,2k)$ would lead to better upper bounds on $H(n,2k)$.

\section{acknowledgement}

The author would like to thank Gábor Simonyi and Kitti Varga for their valuable comments and suggestions that improved the quality of this manuscript.

\begin{bibdiv}
\begin{biblist}

\bib{pintz}{article}{
  title={The Difference Between Consecutive Primes, II},
  author={R. C. Baker},
    author={G. Harman},
    author={J. Pintz},
  journal={Proc London Math Soc},
  volume={83},
  number={3},
  pages={532-562},
  year={2001}
}

\bib{benson}{article}{
  title={Minimal regular graphs of girths eight and twelve},
  author={C. T. Benson},
  journal={Canad. J. Math.},
  volume={18},
  pages={ 1091-1094},
  year={1966}
}

\bib{cib}{article}{
  title={Maximum Size of Reverse-Free Sets of Permutations},
  author={J. Cibulka},
  journal={SIAM Journal on Discrete Mathematics},
  volume={27},
  number={1},
  pages={232-239},
  year={2013}
}

\bib{original}{article}{
  title={Path Separation by Short Cycles},
  author={G. Cohen},
    author={E. Fachini},
    author={J. K\"orner},
  journal={Journal of Graph Theory},
  volume={85},
  number={1},
  pages={ 107-114},
  year={2017}
}

\bib{egor}{article}{
  title={The solution of van der Waerden's problem for permanents},
  author={G. P. Egorychev},
  journal={Akademiia Nauk SSSR},
  volume={258},
  pages={1041-1044},
  year={1981}
}

\bib{falik}{article}{
  title={A proof of the van der Waerden conjecture on the permanent of a doubly stochastic matrix},
  author={D. I. Falikman},
  journal={Matematicheskie Zametki},
  volume={29},
  pages={931-938},
  year={1981}
}

\bib{gyires}{article}{
  title={The common source of several inequalities concerning doubly stochastic matrices},
  author={B. Gyires},
  journal={Publ. Math. Debrecen},
  volume={27},
  pages={291-304},
  year={1980}
}

\bib{komesi}{article}{
  title={Families of graph-different Hamilton paths},
  author={J. Körner},
    author={S. Messuti},
      author={G. Simonyi},
      journal={SIAM Journal on Discrete Mathematics},
    volume={26},
    number={1},
    pages={321-329},
  date={2012}
}

\bib{oddcycle}{article}{
  title={On $k$-neighbor separated permutations},
  author={I. Kovács},
    author={D. Soltész},
    journal={arXiv:1608.05237 [math.CO]
    },
  date={2017}
}

\bib{triangle}{article}{
  title={Triangle different Hamiltonian paths},
  author={I. Kovács},
    author={D. Soltész},
    journal={Journal of Combinatorial Theory, Series B, accepted},
  date={2017}
}

\bib{ingredient}{article}{
  title={A New Series of Dense Graphs of High Girth},
  author={F. Lazebnik},
    author={V. A. Ustimenko},
    author={A. J. Woldar},
  journal={Bulletin of the AMS},
  volume={32},
  number={1},
  pages={73-79},
  year={1995}
}

\bib{reiman}{article}{
  title={Uber ein Problem von K. Zarankiewicz},
  author={I. Reiman},
      journal={Acta Math. Acad. Sci. Hungar.},
    volume={9},
    number={3-4},
    pages={269-273},
  date={1958}
}

\bib{fractional}{article}{
  title={Fractional graph theory},
  author={E. R. Scheinerman},
    author={D. H. Ullman},
  journal={Wiley-Interscience Series
in Discrete Mathematics and Optimization, John Wiley and Sons, Chichester},
  year={1997}
}

\bib{gabortoldme}{article}{
  title={Personal communication},
  author={G. Simonyi},
  year={2017}
}

\end{biblist}
\end{bibdiv}

\end{document}